\newtheorem{theorem}{Theorem}[section]
\newtheorem{lemma}[theorem]{Lemma}
\theoremstyle{proposition}
\newtheorem{corollary}[theorem]{Corollary}
\newtheorem{definition}[theorem]{Definition}
\newtheorem{example}[theorem]{Example}
\theoremstyle{remark}
\newtheorem{remark}[theorem]{Remark}
\numberwithin{equation}{section}
\title[Projective Closure of Semigroup Algebras]
{Projective Closure of Semigroup Algebras}
\author{
Joydip Saha
\and
Indranath Sengupta
\and
Pranjal Srivastava
}
\date{}
\address{\small \rm  Barasat college, 1 Kalyani Road, Barasat, West Bengal 700126, INDIA.} 
\email{saha.joydip56@gmail.com}
\address{\small \rm  Discipline of Mathematics, IIT Gandhinagar, Palaj, Gandhinagar, 
Gujarat 382355, INDIA.}
\email{indranathsg@iitgn.ac.in}
\thanks{The second author is the corresponding author and thanks SERB for the grant CRG/2022/007047.}
\address{\small \rm  Department of Mathematics, IISER Bhopal, INDIA.}
\email{pranjal@iiserb.ac.in}
\thanks{The third author thanks IISER B for post-doc fellowship at IISER Bhopal.}
\date{}
\subjclass[2020]{Primary 13H10, 13P10, 20M25.}
\keywords{Affine Semigroups, Gr\"{o}bner bases, Associated graded rings, Betti numbers, 
Cohen-Macaulay.}
\begin{document}

\begin{abstract}
This paper investigates the projective closure of simplicial affine semigroups in $\mathbb{N}^{d}$, $d \geq 2$. 
We present a characterization of the Cohen-Macaulay property for the projective closure of these semigroups 
using Gr\"{o}bner bases. Additionally, we establish a criterion, based on Gr\"{o}bner bases, 
for determining the Buchsbaum property of non-Cohen-Macaulay projective closures of numerical 
semigroup rings. Lastly, we introduce the concept of $k$-lifting for simplicial affine semigroups 
in $\mathbb{N}^d$, and investigate its relationship with the original simplicial affine semigroup.
\end{abstract}

\maketitle

\section{Introduction}

Let $\Gamma$ be an affine semigroup, fully embedded in $\mathbb{N}^{d}$. 
Let $\Gamma$ be minimally generated by the set $\{\mathbf{m}_1,\dots,\mathbf{m}_{d+r}\}$.
The semigroup algebra $\mathbb{K}[\Gamma]$ over a field $\mathbb{K}$ is generated by the 
monomials $\mathbf{x^{m}}$, where $\mathbf{m} \in \Gamma$, with maximal ideal 
$\mathfrak{m}=(\mathbf{x^{m_{1}}},\dots,\mathbf{x^{m_{d+r}}})$. 
Let $I(\Gamma)$ denote the defining ideal of $\mathbb{K}[\Gamma]$, which is the 
kernel of the $\mathbb{K}$=algebra homomorphism  
$\phi:A=\mathbb{K}[z_{1},\dots,z_{d+r}] \rightarrow \mathbb{K}[\mathbf{s}]$, 
such that $\phi(z_{i})=\mathbf{s^{m_{i}}}$, $i=1,\dots,d+r$ \, and \, $\mathbf{s}=s_{1}\dots s_{d}$.
Let us write $\mathbb{K}[\Gamma]\cong A/I(\Gamma)$. The defining ideal $I(\Gamma)$ 
is a binomial prime ideal.
\medskip

Consider a partial relation order on $\mathbb{N}^{d}$:
$\mathbf{a}=(a_{1},\dots,a_{d}) \leq \mathbf{b}=(b_1,\dots,b_d)$ if and only if $a_i \leq b_i, \forall \, i$. 
For indeterminate $\mathbf{s}=s_{1}\dots s_{d},\, \mathbf{t}=t_{1}\dots t_{d}$ ,
consider a map $\phi^{h}:\mathbb{K}[z_{0},z_{1},\dots,z_{d+r}]\rightarrow \mathbb{K}[\mathbf{s,t}] $ defined by $\phi^{h}(z_{i})=\mathbf{s}^{\mathbf{m}_{d+r}-\mathbf{m}_{i}} \mathbf{t}^{m_{i}} $ for $1 \leq i \leq d+r$ and $\phi^{h}(z_{0})= \mathbf{s^{m_{d+r}}}$. 
The image of the map $\phi^{h}$ is the subalgebra $\mathbb{K}[\mathcal{A}]$ 
of $\mathbb{K}[\mathbf{s,t}]$ generated by the monomials whose exponents are the 
generators of the affine semigroup 
$$\Gamma^h=
\langle\{
(\mathbf{m_{d+r}},\mathbf{0}), (\mathbf{m_{d+r}-m_{1}},\mathbf{m}_1), \dots , (\mathbf{m_{d+r}-m_{d+r-1}},\mathbf{m}_{d+r-1}),(\mathbf{0}, \mathbf{m}_{d+r}) \}\rangle.$$
We call the subalgebra $\mathbb{K}[\Gamma^h]$ the projective closure of the affine semigroup $\Gamma$. Its vanishing ideal $I(\Gamma^h) = \ker(\phi^{h})$.
\medskip

The projective closure of the affine semigroup $\Gamma$ is Cohen-Macaulay if the  
ideal $I(\Gamma^h)$ is a Cohen-Macaulay ideal if and only if  $\mathbf{s^{m_{d+r}-m_{1}}t^{m_{1}}},\dots,\mathbf{s^{m_{d+r}-m_{d}}t^{m_{d}}}, \mathbf{s^{m_{d+r}}} $ is a regular sequence in $\mathbb{K}[\mathcal{A}]$ 
(see \cite[Theorem 2.6]{Goto}).
\medskip

Affine semigroups provide a natural extension of numerical semigroups and play a crucial 
role in defining the projective closure of numerical semigroup rings. Gr\"{o}bner basis 
theory has proven to be a valuable technique for studying properties related to numerical semigroups and their projective closures, including the Cohen-Macaulayness of their 
associated graded rings and the algorithm for finding the Frobenius number of 
numerical semigroups \cite{Arslan, Morales}. Subsequently, Herzog and Stamate 
\cite{Herzog-Stamate} established Gr\"{o}bner basis criteria for determining the 
Cohen-Macaulay property of projective closures of affine monomial curves.
\medskip

In this paper, we investigate the projective closure of simplicial affine semigroups 
in $\mathbb{N}^d$, where $d\geq 2$. We provide a criterion for characterizing the 
Cohen-Macaulayness of the projective closure of simplicial affine semigroups using the 
Gr\"{o}bner basis of the defining ideal $I(\Gamma^h)$.
\medskip

Furthermore, we explore the Buchsbaum property of the corresponding projective closures 
in cases where the projective closure of the numerical semigroup ring is non-Cohen-Macaulay. 
Kamoi \cite{Kamoi} examined the Buchsbaum property of simplicial affine semigroups 
by utilizing the Gr\"{o}bner basis of the corresponding defining ideal. In Section 3.2, 
we present a characterization of the Buchsbaum property for non-Cohen-Macaulay 
projective closures using the Gr\"{o}bner basis of the corresponding defining ideal.
\medskip

In Section \ref{LAS}, we study the lifting of simplicial affine semigroups 
in $\mathbb{N}^d$. This construction is motivated by \c{S}ahin's work 
\cite{Sahin} on $k$-lifting of monomial curves for a given $k\in \mathbb{N}$ 
and also inspired by the shifting of simplicial affine semigroups. For $d=1$, 
Herzog and Stamate investigated the Cohen-Macaulayness of the associated graded 
ring of numerical semigroup rings. Similarly, \c{S}ahin demonstrated that if 
the associated graded ring of the original numerical semigroup ring is Cohen-Macaulay, 
then the associated graded ring of its $k$-lifting is also Cohen-Macaulay. 
We establish that the same result holds for the lifting of simplicial affine 
semigroups when $d\geq 2$. If $\Gamma$ is a simplicial affine semigroup and 
$\Gamma_k$ represents the corresponding $k$-lifting of $\Gamma$, the Betti 
numbers of both the rings are identical, written as $\beta_i(\mathbb{K}[\Gamma])=\beta_i(\mathbb{K}[\Gamma_k])$, where $\beta_i(\mathbb{K}[\Gamma])$ 
and $\beta_i(\mathbb{K}[\Gamma_k])$ denote the $i^{th}$ Betti numbers of 
$\mathbb{K}[\Gamma]$ and $\mathbb{K}[\Gamma_k]$ respectively. 

\section{Preliminaries}

\begin{definition}{\rm 
		Let $\Gamma$ be an affine semigroup in $\mathbb{N}^{r}$, minimally generated by $\mathbf{a}_{1},\dots,\mathbf{a}_{n+r}$. The \textit{rational polyhedral cone} generated by $\Gamma$ is defined as
		$$
		\mathrm{cone}(\Gamma)=\big\{\sum_{i=1}^{n+r}\alpha_{i}\mathbf{a}_{i}: \alpha_{i} \in \mathbb{R}_{\geq 0}, \,i=1,\dots,r+n\big\}.
		$$
		The \textit{dimension} of $\Gamma$ is defined as the dimension of the subspace generated by $\mathrm{cone}(\Gamma)$.
	}
\end{definition} 

The $\mathrm{cone}(\Gamma)$ is the intersection of finitely many closed linear half-spaces in $\mathbb{R}^{r}$, 
each of whose bounding hyperplanes contains the origin. These half-spaces are called \textit{support hyperplanes}. 

\begin{definition}\label{Extremal rays} {\rm 
		Suppose $\Gamma$ is an affine semigroup of dimension $r$ in $\mathbb{N}^{r}$. If $r=1$, $\mathrm{cone}(\Gamma)=\mathbb{R}_{\geq 0}$.
		If $r=2$, the support hyperplanes are one-dimensional vector spaces, which are called the 
		\textit{extremal rays} of $\mathrm{cone}(\Gamma)$. If $r >2$, intersection of any two
		adjacent support hyperplanes is a one-dimensional vector space, called an extremal ray 
		of $\mathrm{cone}(\Gamma)$. An element of $\Gamma$ is called an extremal ray of $\Gamma$ 
		if it is the smallest non-zero vector of $\Gamma$ in an extremal ray of $\mathrm{cone}(\Gamma)$. 
	}
\end{definition}

\begin{definition}\label{Extremal}{\rm 
		An affine semigroup $\Gamma$ in $\mathbb{N}^{r}$, is said to be \textit{simplicial}   
		if the $\mathrm{cone}(\Gamma)$ has exactly $r$ extremal rays, i.e., if there exist a set with $r$ elements, say 
		$\{\mathbf{a}_{1},\dots,\mathbf{a}_{r}\} \subset \{\mathbf{a}_{1},\dots,\mathbf{a}_{r},\mathbf{a}_{r+1},\dots,\mathbf{a}_{r+n}\}$, such that they 
		are linearly independent over $\mathbb{Q}$ and $\Gamma \subset \sum\limits_{i=1}^{r}\mathbb{Q}_{\geq 0}\mathbf{a}_{i}$.
	}
\end{definition}

\begin{definition}\emph{
		The \textit{Ap\'{e}ry set} of $\Gamma$ with respect to an element $\mathbf{b} \in \Gamma$ is defined as $\{\mathbf{a} \in \Gamma: \mathbf{a}-\mathbf{b} \notin \Gamma\}$. Let $E=\{\mathbf{a}_{1},\dots,\mathbf{a}_{r}\}$ be the set of extremal rays of $\Gamma$, then the Ap\'{e}ry set of $\Gamma$ with respect to the set $E$ is
		\[
		\mathrm{Ap}(\Gamma,E)=\{a \in \Gamma \mid \mathbf{a}-\mathbf{a}_{i} \notin \Gamma, \, \forall i=1,\dots,r\}=\cap_{i=1}^{r}\mathrm{Ap}(\Gamma,\mathbf{a}_{i}).
		\]}
\end{definition}

For an affine semigroup $\Gamma$, consider the natural partial ordering 
$\prec_{\Gamma}$ on $\mathbb{N}$, such that for all elements 
$x,y \in \mathbb{N}^{r}$, $x \prec_{\Gamma} y \,\,\, \text{if} \,\,\, y-x \in \Gamma$. 

\begin{definition}[\cite{Jafari-Type}]{\rm
		Let $\mathbf{b} \in \mathrm{max}_{\prec_{\Gamma}} \mathrm{Ap}(\Gamma,E)$; the element 
		$\mathbf{b}-\sum_{i=1}^{r}\mathbf{a}_{i} $ is called a \emph{quasi-Frobenius} element of $\Gamma$. 
		The set of all quasi-Frobenius elements of $\Gamma$ is denoted by $\mathrm{QF}(\Gamma)$ 
		and its cardinality is said to be the \emph{type} of $\Gamma$, denoted by $\mathrm{type}(\Gamma)$. 
	}
\end{definition}

\begin{remark}[{\cite[Proposition 3.3]{Jafari-Type}}]
	If $\mathbb{K}[\Gamma]$ is arithmetically Cohen-Macaulay, then the last Betti 
	number of $\mathbb{K}[\Gamma]$ is called the Cohen-Macaulay type of 
	$\mathbb{K}[\Gamma]$, written as $\mathrm{CMtype}(\mathbb{K}[\Gamma])$. 
	Moreover, $\mathrm{type}(\Gamma)=\mathrm{CMtype}(\mathbb{K}[\Gamma])$.
\end{remark}

\begin{theorem}
	The defining ideal $I(\Gamma^h)$ of $\mathbb{K}[\Gamma^h]$ is given by the homogenization of $f$ with respect to variable $z_0$, where $f \in I(\Gamma)$, i.e.,
	$I(\Gamma^h)=\{f^h: f \in I(\Gamma)\}$.
\end{theorem}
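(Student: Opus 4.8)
The plan is to show the two inclusions $\{f^h : f \in I(\Gamma)\} \subseteq I(\Gamma^h)$ and $I(\Gamma^h) \subseteq \{f^h : f \in I(\Gamma)\}$ after first arguing that the set $\{f^h : f \in I(\Gamma)\}$ is in fact an ideal (this is the content of the statement that the homogenization of an ideal is an ideal, and in our binomial setting it is generated by the homogenizations of a generating set of $I(\Gamma)$). Recall that for $f = \sum c_\alpha z^\alpha \in A = \mathbb{K}[z_1,\dots,z_{d+r}]$ of degree $D = \deg f$, the homogenization with respect to $z_0$ is $f^h = z_0^{D} f(z_1/z_0,\dots)$, equivalently each monomial $z^\alpha$ of degree $|\alpha|$ is replaced by $z_0^{D-|\alpha|} z^\alpha$, and $f = (f^h)|_{z_0 = 1}$ (dehomogenization).

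First I would verify the easy inclusion: if $f \in I(\Gamma)$, then $\phi^h(f^h) = 0$. The key computation is that for each generator the grading works out: since $\phi(z_i) = \mathbf{s}^{\mathbf{m}_i}$ and $\phi^h(z_i) = \mathbf{s}^{\mathbf{m}_{d+r} - \mathbf{m}_i}\mathbf{t}^{\mathbf{m}_i}$, while $\phi^h(z_0) = \mathbf{s}^{\mathbf{m}_{d+r}}$, applying $\phi^h$ to a monomial $z_0^{D - |\alpha|} z^\alpha$ with $|\alpha| = \sum \alpha_i$ gives $\mathbf{s}^{(D-|\alpha|)\mathbf{m}_{d+r}} \cdot \prod_i (\mathbf{s}^{\mathbf{m}_{d+r}-\mathbf{m}_i}\mathbf{t}^{\mathbf{m}_i})^{\alpha_i} = \mathbf{s}^{D\,\mathbf{m}_{d+r} - \sum_i \alpha_i \mathbf{m}_i}\, \mathbf{t}^{\sum_i \alpha_i \mathbf{m}_i}$. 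Thus the total $\mathbf{t}$-exponent of the image is $\sum_i \alpha_i \mathbf{m}_i = \phi(z^\alpha)$ read in the $\mathbf{t}$-variables, and the $\mathbf{s}$-exponent is $D\,\mathbf{m}_{d+r}$ minus the same vector. So if $f = z^\alpha - z^\beta$ is a binomial in $I(\Gamma)$ (hence $\sum \alpha_i \mathbf{m}_i = \sum \beta_i \mathbf{m}_i$ and necessarily $|\alpha| = |\beta| = D$ after homogenizing — here one uses that $I(\Gamma)$ is generated by binomials), then $\phi^h(f^h) = \mathbf{s}^{D\mathbf{m}_{d+r} - v}\mathbf{t}^{v} - \mathbf{s}^{D\mathbf{m}_{d+r} - v}\mathbf{t}^{v} = 0$ where $v = \sum\alpha_i\mathbf{m}_i$. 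Since $I(\Gamma)$ is generated by such binomials and $f \mapsto f^h$ is compatible with the ideal structure up to saturation by $z_0$, the inclusion $\{f^h : f \in I(\Gamma)\} \subseteq I(\Gamma^h)$ follows. One must be slightly careful that $f^h$ for a general (non-binomial, but binomial-generated) $f$ still lands in $I(\Gamma^h)$; this is handled by the standard fact that $(g h)^h = g^h h^h$ and $(g+h)^h$ differs from $g^h + h^h$ only by a power of $z_0$, together with $I(\Gamma^h)$ being prime with $z_0$ a nonzerodivisor modulo it (as $z_0$ corresponds to $\mathbf{s}^{\mathbf{m}_{d+r}} \neq 0$).

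Next I would prove the reverse inclusion $I(\Gamma^h) \subseteq \{f^h : f \in I(\Gamma)\}$. Take $g \in I(\Gamma^h)$; we may assume $g$ is homogeneous of degree $D$ in $z_0,\dots,z_{d+r}$, since $I(\Gamma^h)$ is a homogeneous ideal (its generators can be taken homogeneous because $\Gamma^h$ is a graded / pointed semigroup and $\phi^h$ is a graded map with the grading $\deg z_i = 1$ for all $i$ including $z_0$ — indeed each $\phi^h(z_i)$ has total $(\mathbf{s},\mathbf{t})$-degree $|\mathbf{m}_{d+r}|$). Let $\bar g = g(1, z_1,\dots,z_{d+r})$ be the dehomogenization. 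Setting $z_0 = 1$ in $\phi^h$ sends $z_i \mapsto \mathbf{s}^{\mathbf{m}_{d+r}-\mathbf{m}_i}\mathbf{t}^{\mathbf{m}_i}$, and after the monomial change of variables this is essentially $\phi$ up to an invertible monomial substitution; more precisely one checks $\bar g \in I(\Gamma)$ because the relations defining $\Gamma^h$ dehomogenize exactly to the relations defining $\Gamma$ (the extra coordinate $\mathbf{m}_{d+r} - \mathbf{m}_i$ in the first block becomes irrelevant once $z_0$ is set to $1$, as it only tracked the homogenizing degree). Then $g$ and $(\bar g)^h$ are homogeneous, agree after setting $z_0 = 1$, so $g = z_0^k (\bar g)^h$ for some $k \geq 0$; and since $z_0$ is a nonzerodivisor mod $I(\Gamma^h)$ (prime, $z_0$ not in it), $g \in I(\Gamma^h)$ forces $(\bar g)^h \in I(\Gamma^h)$, and $g = z_0^k (\bar g)^h \in \{f^h : f \in I(\Gamma)\}$ because the homogenization ideal is closed under multiplication by $z_0$ (indeed $z_0 \cdot f^h = (z_0 \text{-padded } f)$, or more cleanly: if $h^h \in$ the set and we multiply by $z_0$, note $z_0 h^h = (z_0 h)^h$ up to handling that $z_0 \notin A$ — better to argue the homogenization set equals the saturation $(I(\Gamma)^{ext} : z_0^\infty)$ which is an ideal containing $z_0 \cdot$ itself is false; instead the clean statement is the classical one that $\{f^h\}$ is precisely the homogeneous ideal whose dehomogenization is $I(\Gamma)$ and which is saturated with respect to $z_0$).

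The main obstacle I anticipate is the bookkeeping around $z_0$: the set $\{f^h : f\in I(\Gamma)\}$ is automatically $z_0$-saturated, and one needs $I(\Gamma^h)$ to be $z_0$-saturated as well for the dehomogenize–rehomogenize argument to close up cleanly. This follows because $I(\Gamma^h)$ is prime and $z_0 = \phi^h(z_0) = \mathbf{s}^{\mathbf{m}_{d+r}} \neq 0$ is not a zerodivisor modulo it, but stating this carefully — and reconciling the grading conventions between the map $\phi^h$ as written and the standard projective-closure setup — is where the care is needed. A secondary subtle point is justifying that $I(\Gamma)$ is generated by binomials of a controlled form so that the degree matching $|\alpha| = |\beta|$ used in the forward direction is legitimate; this is standard (toric ideals are binomial prime ideals, as already noted in the excerpt) but worth a sentence. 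Once these points are pinned down, the theorem follows from the two inclusions.
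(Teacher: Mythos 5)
Your proposal is correct and follows essentially the same route as the paper: both inclusions are handled by the standard homogenize/dehomogenize argument relating $\phi^h$ to $\phi$ (the paper phrases this via the substitution $\mathbf{s}\mapsto\mathbf{t}/\mathbf{s}$), together with the $z_0$-power and saturation bookkeeping you describe, and your reading of the statement as the homogenization ideal $I(\Gamma)^h$ is exactly the intended (and glossed-over) interpretation. The only cosmetic difference is your forward inclusion via binomial generators plus primeness of $I(\Gamma^h)$; the termwise computation $\phi^h\bigl(z_0^{D-|\alpha|}z^{\alpha}\bigr)=\mathbf{s}^{D\mathbf{m}_{d+r}-\sum_i\alpha_i\mathbf{m}_i}\,\mathbf{t}^{\sum_i\alpha_i\mathbf{m}_i}$ already gives $\phi^h(f^h)=0$ for an arbitrary $f\in I(\Gamma)$ (which is what the paper's rational substitution does), so that detour is avoidable but not wrong.
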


\begin{proof} Let $f \in \mathrm{ker}({\phi^h})$, then $\phi^h(f(z_0,\dots,z_{d+r}))=0$, 
which implies that 
$$f(\mathbf{s}^{\mathbf{m}_{d+r}},\mathbf{s}^{\mathbf{m}_{d+r}-\mathbf{m}_{i}} \mathbf{t}^{m_{i}},\dots,\mathbf{t}^{\mathbf{m}_{d+r}})=0.$$ 
Therefore, $  
	(\mathbf{s}^{\mathbf{m}_{d+r})^{\mathrm{deg}\, f}}f(1,\frac{\mathbf{t}^{\mathbf{m}_1}}{\mathbf{s}^{\mathbf{m}_1}},\dots,\frac{\mathbf{t}^{\mathbf{m}_{d+r}}}{\mathbf{t}^{\mathbf{m}_{d+r}}})=0$, hence
	$ f(1,\frac{\mathbf{t}^{\mathbf{m}_1}}{\mathbf{s}^{\mathbf{m}_1}},\dots,\frac{\mathbf{t}^{\mathbf{m}_{d+r}}}{\mathbf{t}^{\mathbf{m}_{d+r}}})=0$. 
	 Now we have $ \mathbb{K}[\frac{\mathbf{t}}{\mathbf{s}}]\cong \mathbb{K}[\mathbf{s}]$, 
	 thus 
	 $f(1,\mathbf{s}^{\mathbf{m}_1},\dots,\mathbf{s}^{\mathbf{m}_{d+r}})=0$ and $ f(1,z_1,\dots,z_{d+r}) \in \mathrm{ker}(\phi)$. Which implies 
	   $z_{0}^{\mathrm{deg}\,f} f(1,\frac{z_1}{z_0},\dots,\frac{z_{d+r}}{z_0}) \in 
	   \mathrm{ker}(\phi^h)$.
\smallskip
	
	Conversely, if $f^h \in (\mathrm{ker}(\phi))^h$, where $f \in \mathrm{ker}(\phi)$, 
	then $f(\mathbf{s}^{\mathbf{m}_1},\dots,\mathbf{s}^{\mathbf{m}_{d+r}})=0$, but $\mathbb{K}[\frac{\mathbf{t}}{\mathbf{s}}]\cong \mathbb{K}[\mathbf{s}]$. Therefore, $f(\frac{\mathbf{t}^{\mathbf{m}_1}}{\mathbf{s}^{\mathbf{m}_1}},\dots,\frac{\mathbf{t}^{\mathbf{m}_{d+r}}}{\mathbf{t}^{\mathbf{m}_{d+r}}})=0$, which implies that $$(\mathbf{s}^{\mathbf{m}_{d+r}})^{\mathrm{deg}\,f}f(\mathbf{s}^{\mathbf{m}_{d+r}},\mathbf{s}^{\mathbf{m}_{d+r}-\mathbf{m}_{i}} \mathbf{t}^{m_{i}},\dots,\mathbf{t}^{\mathbf{m}_{d+r}})=0.$$
	Hence, $f^h \in \mathrm{ker}(\phi^h)$. 
\end{proof}

\section{Cohen-Macaulayness of Projective closure of simplicial affine semigroups}
In this section, we study the Cohen-Macaulay characterization of the projective closure of  simplicial affine semigroups in $\mathbb{N}^d$ in terms of a Gr\"{o}bner basis of 
the defining ideal of corresponding semigroups.

\begin{theorem}\label{Extremal rays}
Let $E_{\Gamma}=\{\mathbf{m_{1}},\dots,\mathbf{m_{d}}\}$ be a set of extremal rays of $\Gamma$. Then the set $$E_{\Gamma^h}=\{(\mathbf{m}_{d+r}-\mathbf{m}_{i},\mathbf{m}_{i}),(\mathbf{m}_{d+r},\mathbf{0}) \vert 1 \leq i \leq d\}$$ is a set of extremal rays of $\Gamma^h$.
\end{theorem}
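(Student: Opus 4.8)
The plan is to show that the cone of $\Gamma^h$ in $\mathbb{R}^{2d}$ is a simplicial cone whose $d+1$ extremal rays are precisely the rays spanned by the $d+1$ listed generators, and then to verify the minimality condition (each listed vector is the smallest nonzero lattice point of $\Gamma^h$ on its ray). First I would set up coordinates: since $E_\Gamma = \{\mathbf{m}_1,\dots,\mathbf{m}_d\}$ are extremal rays of the simplicial semigroup $\Gamma$, by Definition~\ref{Extremal} these are $\mathbb{Q}$-linearly independent and $\Gamma \subset \sum_{i=1}^d \mathbb{Q}_{\geq 0}\mathbf{m}_i$; in particular $\dim\Gamma = d$. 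I would first check that $\dim \Gamma^h = d+1$: the generators of $\Gamma^h$ live in $\mathbb{R}^{2d}$, but the homogenizing relation forces them into the $(d+1)$-dimensional subspace, and the $d+1$ vectors $(\mathbf{m}_{d+r},\mathbf{0})$, $(\mathbf{m}_{d+r}-\mathbf{m}_i,\mathbf{m}_i)$ for $1\le i\le d$ are affinely/linearly independent there (using linear independence of the $\mathbf{m}_i$ in the second block of coordinates, with $(\mathbf{m}_{d+r},\mathbf{0})$ pinning down the extra dimension).

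Next I would show $\Gamma^h \subset \sum_{v \in E_{\Gamma^h}} \mathbb{Q}_{\geq 0} v$, which by Definition~\ref{Extremal} is what it means for these $d+1$ rays to contain the cone and hence be exactly its extremal rays. Take an arbitrary generator $(\mathbf{m}_{d+r}-\mathbf{m}_j,\mathbf{m}_j)$ of $\Gamma^h$ with $1\le j \le d+r$ (or $(\mathbf{0},\mathbf{m}_{d+r})$). Write $\mathbf{m}_j = \sum_{i=1}^d \lambda_i \mathbf{m}_i$ with $\lambda_i \in \mathbb{Q}_{\geq 0}$, which is possible since $\mathbf{m}_j \in \Gamma \subset \sum \mathbb{Q}_{\geq 0}\mathbf{m}_i$. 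Then I claim
$$
(\mathbf{m}_{d+r}-\mathbf{m}_j,\ \mathbf{m}_j) \;=\; \Big(1 - \textstyle\sum_{i=1}^d \lambda_i\Big)(\mathbf{m}_{d+r},\mathbf{0}) \;+\; \textstyle\sum_{i=1}^d \lambda_i\,(\mathbf{m}_{d+r}-\mathbf{m}_i,\ \mathbf{m}_i),
$$
which one checks coordinate-block by coordinate-block. The coefficient $1 - \sum \lambda_i$ is the point where I expect the main work: I must argue it is $\geq 0$. This should follow from the fact that the extremal-ray decomposition of $\mathbf{m}_{d+r}$ itself has coefficient sum that dominates — more precisely, writing $\mathbf{m}_{d+r} = \sum \mu_i \mathbf{m}_i$, the support hyperplane structure of $\mathrm{cone}(\Gamma)$ together with $\mathbf{m}_{d+r} - \mathbf{m}_j \in \Gamma$ (it is a generator of $\Gamma^h$ coming from a genuine element, hence its first block lies in $\mathrm{cone}(\Gamma)$) forces $\sum\lambda_i \le \sum \mu_i$; after rescaling so that $(\mathbf{m}_{d+r},\mathbf{0})$ is the chosen representative on its ray one gets exactly $1 - \sum\lambda_i \geq 0$. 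I would isolate this inequality as the technical heart of the argument and prove it using that $\mathbf{m}_{d+r}-\mathbf{m}_j\in\mathrm{cone}(\Gamma)$ and the nonnegativity of the support-hyperplane functionals on $\mathrm{cone}(\Gamma)$.

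Finally I would verify the minimality (smallest nonzero lattice vector) clause of Definition~\ref{Extremal rays}. For the ray through $(\mathbf{m}_{d+r},\mathbf{0})$: any element of $\Gamma^h$ on this ray has zero second block, so it is a nonnegative-integer combination of those generators whose second block vanishes; by construction the only such generator is $(\mathbf{m}_{d+r},\mathbf{0})$ itself, so it is the smallest. For the ray through $(\mathbf{m}_{d+r}-\mathbf{m}_i,\mathbf{m}_i)$: projecting to the second block sends $\Gamma^h$ into $\Gamma$ and sends this ray's generator to $\mathbf{m}_i$, which is an extremal ray of $\Gamma$ hence the smallest nonzero element of $\Gamma$ on its ray; any smaller element of $\Gamma^h$ on the ray of $(\mathbf{m}_{d+r}-\mathbf{m}_i,\mathbf{m}_i)$ would project to a smaller element of $\Gamma$ on the ray of $\mathbf{m}_i$, a contradiction — giving minimality in $\Gamma^h$. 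Assembling the three parts — dimension $d+1$, the cone containment, and minimality on each ray — completes the proof that $E_{\Gamma^h}$ is the set of extremal rays of $\Gamma^h$. The cone-containment inequality $1 - \sum\lambda_i \geq 0$ is the step I expect to require the most care.
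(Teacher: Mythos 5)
Your plan follows the same skeleton as the paper's argument (write each generator of $\Gamma^h$ in terms of $E_{\Gamma^h}$, use linear independence), and your displayed identity is exactly the paper's decomposition in cleaned-up form; you also add a minimality check on each ray that the paper leaves implicit. But the step you correctly single out as the technical heart cannot be carried out: the inequality $1-\sum_{i}\lambda_i\ge 0$ is false in general under the stated hypotheses, so the containment $\Gamma^h\subset\sum_{v\in E_{\Gamma^h}}\mathbb{Q}_{\ge 0}v$ on which your whole plan rests genuinely fails. Concretely, take $d=2$ and $\Gamma$ minimally generated by $\mathbf{m}_1=(1,2)$, $\mathbf{m}_2=(2,1)$, $\mathbf{m}_3=\mathbf{m}_{d+r}=(2,2)$; this is simplicial with $E_\Gamma=\{\mathbf{m}_1,\mathbf{m}_2\}$ and $\mathbf{m}_i\le\mathbf{m}_{d+r}$ componentwise, yet $\mathbf{m}_{d+r}=\tfrac23\mathbf{m}_1+\tfrac23\mathbf{m}_2$, so for the generator $(\mathbf{0},\mathbf{m}_{d+r})$ (your case $j=d+r$) one gets $\sum\lambda_i=\tfrac43>1$, and indeed $(0,0,2,2)=-\tfrac13(2,2,0,0)+\tfrac23(1,0,1,2)+\tfrac23(0,1,2,1)$ is the unique representation over the linearly independent set $E_{\Gamma^h}$, with a negative coefficient. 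No support-hyperplane functional argument can rescue $\sum\lambda_i\le 1$, since the inequality is simply not true; in this example $\mathrm{cone}(\Gamma^h)$ is a three-dimensional cone with four extremal rays, so it is not even simplicial, and the cross-section is a quadrilateral rather than a triangle.

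This is also precisely where you diverge from the paper: its proof only exhibits each $(\mathbf{m}_{d+r}-\mathbf{m}_j,\mathbf{m}_j)$ as a $\mathbb{Q}$-linear combination of $E_{\Gamma^h}$ with coefficients of arbitrary sign and then invokes linear independence; it never asserts, let alone proves, the conical (nonnegative) containment your argument needs. So your proposal is not a different route to the same intermediate fact but hinges on a step that fails (and, as the example shows, the failure also puts pressure on reading the statement as ``$E_{\Gamma^h}$ is the full set of extremal rays''). Your minimality verification for each ray (projection to the second block for $(\mathbf{m}_{d+r}-\mathbf{m}_i,\mathbf{m}_i)$, and the vanishing-second-block observation for $(\mathbf{m}_{d+r},\mathbf{0})$) is correct as far as it goes, but it cannot compensate for the missing containment.
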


\begin{proof}
Let $\mathbf{m}_{j} \in \Gamma$ for $j>0$, then there exists $\alpha_{1},\dots,\alpha_{d} \in \mathbb{Q}$ such that $\mathbf{m}_{j}= \sum_{i=1}^{d}\alpha_{i}\mathbf{m}_{i}$. For  $(\mathbf{m}_{d+r}-\mathbf{m}_{j},\mathbf{m}_{j}) \in \overline{\Gamma}$, we have 
\begin{align*}
	(\mathbf{m}_{d+r}-\mathbf{m}_{j},\mathbf{m}_{j})=&\sum_{i=2}^{d}\big(\alpha_{i}(\mathbf{m}_{d+r}-\mathbf{m}_{i},\mathbf{m}_{i})-\alpha_{2}(\mathbf{m}_{d+r},0)\big)\\
	&+\big(\alpha_{1}(\mathbf{m}_{d+r}-\mathbf{m}_{1},\mathbf{m}_{1})-(\alpha_{1}-1)(\mathbf{m}_{d+r},0)\big).
\end{align*}
It is clear that $\{(\mathbf{m}_{d+r}-\mathbf{m}_{i},\mathbf{m}_{i}),(\mathbf{m}_{d+r},\mathbf{0}) \vert 1 \leq i \leq d\}$ is linearly independent over $\mathbb{Q}$. Hence, $\{(\mathbf{m}_{d+r}-\mathbf{m}_{i},\mathbf{m}_{i}),(\mathbf{m}_{d+r},\mathbf{0}) \vert 1 \leq i \leq d\}$ is a set of extremal rays of $\overline{\Gamma}$.
\end{proof}

Let $\prec$ denotes the degree reverse lexicographic ordering on $A=\mathbb{K}[z_{1},\dots,z_{d+r}]$ induced by $z_{1}\prec \dots \prec z_{d+r}$ and $\prec_{0}$ the induced reverse lexicographic order on $A[z_{0}]$, where $z_0 \prec z_1$. The following theorem is one of the main results of this paper. For any monomial ideal $I$, we let $G(I)$ denote the unique minimal set of monomial generators for $I$.

\begin{theorem}
Let $\Gamma$ be a simplicial affine semigroup in $\mathbb{N}^d, d\geq 2$ with set of extremal rays $E=\{\mathbf{m}_{1},\dots,\mathbf{m}_{d}\}$. Then, the followings are equivalent.
\begin{enumerate}[(a)]

\item $\mathbb{K}[\Gamma]$ is arithmetically Cohen-Macaulay.

\item $\mathbb{K}[\Gamma^h]$ is arithmetically Cohen-Macaulay.

\item $z_0,z_1,\dots,z_d$ do not divide any element of $G(\mathrm{in}_{\prec_0}(I(\Gamma^h))$.

\item $z_1,\dots,z_d$ do not divide any element of $G(\mathrm{in}_{\prec}(I(\Gamma))$.
\end{enumerate}
\end{theorem}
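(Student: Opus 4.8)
The plan is to prove the equivalences in the cyclic order $(a) \Leftrightarrow (d)$, then $(d) \Leftrightarrow (c)$, then $(c) \Leftrightarrow (b)$, using the dictionary between Cohen-Macaulayness and regular sequences of variables modulo the initial ideal.

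\medskip

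\textbf{Step 1: $(a) \Leftrightarrow (d)$.} Since $E = \{\mathbf{m}_1,\dots,\mathbf{m}_d\}$ is the set of extremal rays, the monomials $\mathbf{x}^{\mathbf{m}_1},\dots,\mathbf{x}^{\mathbf{m}_d}$ form a homogeneous system of parameters for $\mathbb{K}[\Gamma]$ (this is the standard fact for simplicial affine semigroups; $\dim \mathbb{K}[\Gamma] = d$ and these elements generate an $\mathfrak{m}$-primary ideal). Hence $\mathbb{K}[\Gamma]$ is Cohen-Macaulay if and only if $z_1,\dots,z_d$ form a regular sequence on $A/I(\Gamma)$. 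Now pass to the initial ideal: with respect to the degree reverse lexicographic order $\prec$ with $z_1 \prec \dots \prec z_{d+r}$, a sequence of variables $z_1,\dots,z_d$ that are the \emph{smallest} variables is a regular sequence on $A/I(\Gamma)$ if and only if it is a regular sequence on $A/\mathrm{in}_\prec(I(\Gamma))$ (regular sequences of initial variables lift; this uses the well-known property that for reverse lexicographic orders, saturation and quotient by the last variables behave well, or alternatively that $\mathrm{in}_\prec$ is flat-degeneration-compatible). Finally, for a monomial ideal $J$, the variables $z_1,\dots,z_d$ form a regular sequence on $A/J$ if and only if no minimal generator of $J$ is divisible by any of $z_1,\dots,z_d$. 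Chaining these gives $(a) \Leftrightarrow (d)$.

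\medskip

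\textbf{Step 2: $(d) \Leftrightarrow (c)$.} Here the key input is the relationship between $\mathrm{in}_\prec(I(\Gamma))$ and $\mathrm{in}_{\prec_0}(I(\Gamma^h))$. Since $I(\Gamma^h)$ is the homogenization of $I(\Gamma)$ with respect to $z_0$ (by the theorem proved in the excerpt) and $\prec_0$ is the reverse lexicographic extension with $z_0$ as the smallest variable, a Gröbner basis of $I(\Gamma)$ homogenizes to a Gröbner basis of $I(\Gamma^h)$, and therefore $\mathrm{in}_{\prec_0}(I(\Gamma^h)) = \mathrm{in}_\prec(I(\Gamma)) \cdot A[z_0]$, i.e., the initial ideal of the homogenization is the extension of the initial ideal (no $z_0$ appears, because $z_0$ is chosen smallest and homogenization adds $z_0$ only to make up degree in the \emph{non-leading} terms). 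Consequently $z_0$ never divides a minimal generator of $\mathrm{in}_{\prec_0}(I(\Gamma^h))$ automatically, and $z_i$ ($1 \le i \le d$) divides some minimal generator of $\mathrm{in}_{\prec_0}(I(\Gamma^h))$ if and only if it divides some minimal generator of $\mathrm{in}_\prec(I(\Gamma))$. This yields $(c) \Leftrightarrow (d)$; the inclusion of $z_0$ in the list in $(c)$ is harmless.

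\medskip

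\textbf{Step 3: $(c) \Leftrightarrow (b)$.} This mirrors Step 1 for $\Gamma^h$. By Theorem \ref{Extremal rays}, the set $E_{\Gamma^h} = \{(\mathbf{m}_{d+r}-\mathbf{m}_i,\mathbf{m}_i),(\mathbf{m}_{d+r},\mathbf{0}) \mid 1 \le i \le d\}$ is a set of extremal rays of $\Gamma^h$; these $d+1$ elements (note $\dim \mathbb{K}[\Gamma^h] = d+1$) give a homogeneous system of parameters, corresponding to the variables $z_1,\dots,z_d,z_0$. Hence $\mathbb{K}[\Gamma^h]$ is Cohen-Macaulay iff $z_0,z_1,\dots,z_d$ is a regular sequence on $A[z_0]/I(\Gamma^h)$, iff (by the same reverse-lex lifting argument, now with $z_0$ smallest followed by $z_1,\dots,z_d$) it is a regular sequence on $A[z_0]/\mathrm{in}_{\prec_0}(I(\Gamma^h))$, iff no minimal generator of that monomial initial ideal is divisible by any of $z_0,z_1,\dots,z_d$ — which is exactly $(c)$. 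Alternatively one can invoke the Goto criterion cited in the introduction (\cite[Theorem 2.6]{Goto}) to identify Cohen-Macaulayness of $\mathbb{K}[\Gamma^h]$ with the regular-sequence condition directly.

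\medskip

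\textbf{Main obstacle.} The crux is the claim in Step 2 that homogenizing a Gröbner basis with respect to the \emph{smallest} variable in a reverse lexicographic order yields a Gröbner basis of the homogenization with the same (extended) initial ideal, together with the lifting statement in Steps 1 and 3 that a regular sequence consisting of the smallest variables on $A/\mathrm{in}(I)$ is already a regular sequence on $A/I$. Both are standard but need to be invoked carefully: the first is precisely why the reverse lexicographic order (rather than an arbitrary term order) is used, and the second rests on the fact that for such orders $\mathrm{in}(I : z_{i_1}\cdots z_{i_k}) = \mathrm{in}(I) : z_{i_1}\cdots z_{i_k}$ when the $z_{i_j}$ are the bottom variables. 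I would isolate these two facts as lemmas (or cite them) before assembling the equivalences, since everything else is bookkeeping about divisibility of monomial generators.
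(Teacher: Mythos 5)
Your overall skeleton (extremal rays give a system of parameters; transfer regular sequences between the ideal and its initial ideal for the revlex order with those variables smallest; identify $G(\mathrm{in}_{\prec}(I(\Gamma)))$ with $G(\mathrm{in}_{\prec_0}(I(\Gamma^h)))$ via homogenization of a Gr\"obner basis) is the same as the paper's, and your Steps 2 and 3 are sound: Step 2 is exactly the content of the cited Herzog--Stamate lemma, and in Step 3 the transfer between $A[z_0]/I(\Gamma^h)$ and $A[z_0]/\mathrm{in}_{\prec_0}(I(\Gamma^h))$ is legitimate because $I(\Gamma^h)$ \emph{is} standard-graded homogeneous, so the Bayer--Stillman/Eisenbud 15.13 mechanism applies (the paper does the same, together with Goto's criterion and Theorem \ref{Extremal rays}).

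The genuine gap is in Step 1, specifically the implication (a) $\Rightarrow$ (d). The ``standard fact'' you invoke --- that for revlex with $z_1,\dots,z_d$ the smallest variables one has $\mathrm{in}_\prec(I:z_i)=\mathrm{in}_\prec(I):z_i$, hence a regular sequence on $A/I(\Gamma)$ is a regular sequence on $A/\mathrm{in}_\prec(I(\Gamma))$ --- is only a theorem for ideals homogeneous in the \emph{standard} grading, and $I(\Gamma)$ is not standard-graded (it is only $\Gamma$-graded; its binomials have terms of different total degrees). Without homogeneity the fact is simply false: take $I=(z_1z_3^2-z_2^2)\subset \mathbb{K}[z_1,z_2,z_3]$ with degrevlex induced by $z_1\prec z_2\prec z_3$; then $A/I$ is a domain, so $z_1$ is regular on $A/I$, yet $\mathrm{in}_\prec(I)=(z_1z_3^2)$, so $z_1$ divides the minimal generator and is a zerodivisor on $A/\mathrm{in}_\prec(I)$, and $\mathrm{in}_\prec(I):z_1=(z_3^2)\neq \mathrm{in}_\prec(I:z_1)$. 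So the implication ``$\mathbb{K}[\Gamma]$ Cohen--Macaulay $\Rightarrow$ $z_1,\dots,z_d$ do not divide any minimal generator of $\mathrm{in}_\prec(I(\Gamma))$'' cannot be obtained from generic Gr\"obner degeneration arguments; it genuinely uses the simplicial semigroup structure (Ap\'ery sets and the short resolution), which is why the paper outsources exactly this equivalence to \cite[Corollary 4.7]{Ojeda-Short} rather than proving it. (The converse direction (d) $\Rightarrow$ (a) is salvageable, since ``regular modulo the initial ideal implies regular modulo the ideal'' needs no homogeneity, but as written your Step 1 rests both directions on the same invalid general principle.) To repair the proof you should either cite the Ojeda--Short criterion, as the paper does, or supply a semigroup-theoretic argument for (a) $\Rightarrow$ (d); the rest of your bookkeeping then goes through.
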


\begin{proof} (a) if and only if (d) follows from \cite[Corollary 4.7]{Ojeda-Short} and 
(c) if and only if (d), follows from the fact $G(\mathrm{in}_{\prec}(I(\Gamma))=G(\mathrm{in}_{\prec_0}(I(\Gamma^h)))$ (See \cite[Lemma 2.1]{Herzog-Stamate}).
\medskip
  
\noindent{$\mathbf{(a) \Rightarrow (b)}$.} 
Suppose $\mathbb{K}[\Gamma]$ is Cohen-Macaulay. By \cite[Corollary 4.7]{Ojeda-Short}, 
$z_{1},\dots,z_{d}$ do not divide initial ideal $\mathrm{in}_{\prec}(I(\Gamma))$. 
Also,  $z_{1},\dots,z_{d}$ form a regular sequence in $A/I(\Gamma)$, hence 
$z_{1},\dots,z_{d}$ form a regular sequence in $A/\mathrm{in}_{\prec}(I(\Gamma)$, 
by \cite[Theorem 15.13]{Eisenbud}. Since $G(\mathrm{in}_{\prec}(I(\Gamma))=G(\mathrm{in}_{\prec_0}(I(\Gamma^h)))$, we have  $z_0$ is regular element in  $A[z_0]/((z_1,\dots,z_d)+\mathrm{in}_{\prec}(I(\Gamma)))=A[z_0]/((z_1,\dots,z_d)+\mathrm{in}_{\prec}(I(\Gamma^h)))$. Hence, $z_{1},\dots,z_{d},z_{0}$ form a regular sequence in $A[z_0]/\mathrm{in}_{\prec}(I(\Gamma^h))$. Again by \cite[Theorem 15.13]{Eisenbud}, $z_{1},\dots,z_{d},z_{0}$ 
is a regular sequence in $A[z_0]/I(\Gamma^h))$. 
Hence, $\mathbf{s^{m_{d+r}-m_{1}}t^{m_{1}}},\dots,\mathbf{s^{m_{d+r}-m_{d}}t^{m_{d}}}, \mathbf{s^{m_{d+r}}} $ is a regular sequence in $\mathbb{K}[\Gamma^h]$. Therefore $\mathbb{K}[\Gamma^h]$ is Cohen-Macaulay.
\medskip

\noindent{$\mathbf{(b) \Rightarrow (c)}$.} 
 By Lemma \ref{Extremal rays}, $\{(\mathbf{m}_{d+r}-\mathbf{m}_{i},\mathbf{m}_{i}),(\mathbf{m}_{d+r},\mathbf{0}) \vert 1 \leq i \leq d\}$ is a set of extremal rays of $\Gamma^h$. Again  by \cite[Corollary 4.7]{Ojeda-Short} and due to Lemma \ref{Extremal rays}, 
$\mathbb{K}[\Gamma^h]$ is Cohen-Macaulay if and only if $(\mathbf{s^{m_{d+r}-m_{1}}t^{m_{1}}},\dots,\mathbf{s^{m_{d+r}-m_{d}}t^{m_{d}}}, \mathbf{s^{m_{d+r}}}) $ is a regular sequence in $\mathbb{K}[\Gamma^h]$. This is equivalent to the fact that 
$z_{1},\dots,z_{d},z_{0}$ is a regular sequence in $A[z_0]/\mathrm{in}_{\prec}(I(\Gamma^h))$. Therefore, $z_0,z_1,\dots,z_d$ do not divide $G(\mathrm{in}_{\prec_0}(I(\Gamma^h))$.
\end{proof}

Let $ Q_{I(\Gamma)}=\{(\alpha_{d+1},\dots,\alpha_{d+r}) \in \mathbb{N}^{r} \mid z_{d+1}^{\alpha_{d+1}}\dots z_{d+r}^{\alpha_{d+r}} \notin \mathrm{in}_{\prec}I(\Gamma)\}$ and 
$Q_{I(\Gamma^{h})}=\{(\alpha_{d+1},\dots,\alpha_{d+r}) \in \mathbb{N}^{r} \mid z_{d+1}^{\alpha_{1}}\dots z_{d+r}^{\alpha_{d+r}} \notin \mathrm{in}_{\prec_{0}}I(\Gamma^{h})\}$. 
It follows from Proposition 3.3 in \cite{Ojeda-Short}, there is a bijection between $Q_{I(\Gamma^{h})}$ and $\mathrm{Ap}(\Gamma^h,E_\Gamma^h)$ given by 
$(\alpha_{d+1},\dots,\alpha_{d+r}) \mapsto \sum_{i=d+1}^{d+r}\alpha_{i}(\mathbf{m}_{d+r}-\mathbf{m}_{i},\mathbf{m}_{i})$. 
Similarly, $(\alpha_{d+1},\dots,\alpha_{d+r}) \mapsto \sum_{i=d+1}^{d+r}\alpha_{i}\mathbf{m}_{i}$ is a bijection between $Q_{I(\Gamma)}$ and $\mathrm{Ap}(\Gamma,E_\Gamma)$.

\begin{theorem}
Let $\mathbb{K}[\Gamma^{h}]$ be arithmetically Cohen-Macaulay.
The Ap\'{e}ry set of $\Gamma^{h}$ with respect to $E_{\Gamma^{h}}$ is given by $\mathrm{Ap}(\Gamma^{h},E_{\Gamma^h})=$
$$\left\{\sum_{i=d+1}^{d+r}\alpha_{i}(\mathbf{m}_{d+r}-\mathbf{m}_{i},\mathbf{m}_{i}) :  \sum_{i=d+1}^{d+r}\alpha_{i}\mathbf{m}_{i} \in \mathrm{Ap}(\Gamma,E_\Gamma),\, \mbox{for}\, \mbox{some}\, (\alpha_{d+1},\dots,\alpha_{d+r})\in \mathbb{N}^r\right\}.$$
\end{theorem}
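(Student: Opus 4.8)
The plan is to establish the equality of the two sets by a double inclusion, using the bijection between $Q_{I(\Gamma^h)}$ and $\mathrm{Ap}(\Gamma^h,E_{\Gamma^h})$ together with the bijection between $Q_{I(\Gamma)}$ and $\mathrm{Ap}(\Gamma,E_\Gamma)$ recalled just above the statement, and the identity $G(\mathrm{in}_\prec(I(\Gamma)))=G(\mathrm{in}_{\prec_0}(I(\Gamma^h)))$ from \cite[Lemma 2.1]{Herzog-Stamate}. Concretely, I would first observe that since $\mathbb{K}[\Gamma^h]$ is arithmetically Cohen-Macaulay, the previous theorem tells us that $z_0,z_1,\dots,z_d$ do not divide any element of $G(\mathrm{in}_{\prec_0}(I(\Gamma^h)))$; hence every minimal generator of $\mathrm{in}_{\prec_0}(I(\Gamma^h))$ involves only the variables $z_{d+1},\dots,z_{d+r}$. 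In particular a monomial $z_{d+1}^{\alpha_{d+1}}\cdots z_{d+r}^{\alpha_{d+r}}$ lies in $\mathrm{in}_{\prec_0}(I(\Gamma^h))$ if and only if it lies in $\mathrm{in}_\prec(I(\Gamma))$ (using $G(\mathrm{in}_\prec(I(\Gamma)))=G(\mathrm{in}_{\prec_0}(I(\Gamma^h)))$ and the fact that, by the same argument applied to $\Gamma$ via Cohen-Macaulayness of $\mathbb{K}[\Gamma]$, the minimal generators of $\mathrm{in}_\prec(I(\Gamma))$ already involve only $z_{d+1},\dots,z_{d+r}$). Therefore $Q_{I(\Gamma^h)}=Q_{I(\Gamma)}$ as subsets of $\mathbb{N}^r$.

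Next I would transport this equality of index sets through the two bijections. For any $(\alpha_{d+1},\dots,\alpha_{d+r})\in\mathbb{N}^r$, membership of $\sum_{i=d+1}^{d+r}\alpha_i(\mathbf{m}_{d+r}-\mathbf{m}_i,\mathbf{m}_i)$ in $\mathrm{Ap}(\Gamma^h,E_{\Gamma^h})$ is equivalent to $(\alpha_{d+1},\dots,\alpha_{d+r})\in Q_{I(\Gamma^h)}$, which by the previous paragraph is equivalent to $(\alpha_{d+1},\dots,\alpha_{d+r})\in Q_{I(\Gamma)}$, which in turn is equivalent to $\sum_{i=d+1}^{d+r}\alpha_i\mathbf{m}_i\in\mathrm{Ap}(\Gamma,E_\Gamma)$. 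This chain of equivalences gives exactly the claimed set-theoretic description: the left-hand side consists of all vectors $\sum\alpha_i(\mathbf{m}_{d+r}-\mathbf{m}_i,\mathbf{m}_i)$ whose ``$\mathbf{t}$-coordinate combination'' $\sum\alpha_i\mathbf{m}_i$ belongs to $\mathrm{Ap}(\Gamma,E_\Gamma)$. I should be slightly careful about well-definedness of the description on the right: if two different exponent vectors $\alpha,\alpha'$ produce the same element of $\mathrm{Ap}(\Gamma^h,E_{\Gamma^h})$, injectivity of the bijection $Q_{I(\Gamma^h)}\to\mathrm{Ap}(\Gamma^h,E_{\Gamma^h})$ forces $\alpha=\alpha'$, so no ambiguity arises; the existential quantifier ``for some $(\alpha_{d+1},\dots,\alpha_{d+r})$'' in the statement is then harmless.

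The main obstacle, and the step I would spell out most carefully, is the reduction $Q_{I(\Gamma^h)}=Q_{I(\Gamma)}$: one must justify that testing a pure monomial in $z_{d+1},\dots,z_{d+r}$ against $\mathrm{in}_{\prec_0}(I(\Gamma^h))$ gives the same answer as testing it against $\mathrm{in}_\prec(I(\Gamma))$, and this relies crucially on the Cohen-Macaulay hypothesis to guarantee that no minimal generator of either initial ideal is ``polluted'' by the variables $z_0,z_1,\dots,z_d$ (otherwise a monomial generator could be a proper divisor of a pure $z_{d+1},\dots,z_{d+r}$-monomial only after multiplying by those extra variables, but since we are looking at divisibility of pure monomials this subtlety is exactly what the Cohen-Macaulay condition removes). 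Once this is in place, everything else is a formal manipulation of the two bijections, and the proof concludes. A final sanity check I would perform is that the right-hand side as written is manifestly contained in $\Gamma^h$ and consists of elements none of which have any $(\mathbf{m}_{d+r}-\mathbf{m}_i,\mathbf{m}_i)$ or $(\mathbf{m}_{d+r},\mathbf{0})$ ($1\le i\le d$) subtractable while staying in $\Gamma^h$, confirming consistency with the definition of the Apéry set with respect to $E_{\Gamma^h}$.
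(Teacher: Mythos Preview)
Your proposal is correct and follows essentially the same route as the paper: both arguments use the bijections $Q_{I(\Gamma)}\leftrightarrow\mathrm{Ap}(\Gamma,E_\Gamma)$ and $Q_{I(\Gamma^h)}\leftrightarrow\mathrm{Ap}(\Gamma^h,E_{\Gamma^h})$ recalled just before the theorem, together with the identity $G(\mathrm{in}_\prec(I(\Gamma)))=G(\mathrm{in}_{\prec_0}(I(\Gamma^h)))$, to transport Ap\'ery elements through the common index set $Q_{I(\Gamma)}=Q_{I(\Gamma^h)}$. The paper's proof is terser---it only writes out the inclusion $\mathrm{Ap}(\Gamma^h,E_{\Gamma^h})\subseteq\text{RHS}$ explicitly---whereas you spell out both directions and the well-definedness check; one small remark is that the equality $Q_{I(\Gamma)}=Q_{I(\Gamma^h)}$ already follows directly from the equality of generating sets of the two initial ideals without separately invoking the Cohen--Macaulay condition on the generators, so that part of your discussion is slightly more cautious than necessary.
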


\begin{proof}
Let  $\mathbf{\gamma^h}\in \mathrm{Ap}(\Gamma^{h},E_{\Gamma^{h}}) \setminus \{\mathbf{0}\}$. 
We have $ \mathbf{\gamma^h}=\sum_{i=d+1}^{d+r}\alpha_{i}(\mathbf{m}_{d+r}-\mathbf{m}_{i},\mathbf{m}_{i})$, for some $ (\alpha_{d+1},\dots,\alpha_{d+r})\in \mathbb{N}^r$. 
By the bijection defined above, $z_{d+1}^{\alpha_{d+1}}\dots z_{d+r}^{\alpha_{d+r}} \notin \mathrm{in}_{\prec_0}(I(\Gamma^h))$. However, 
$G(\mathrm{in}_{\prec_0}(I(\Gamma^h)))=G(\mathrm{in}_{\prec}(I(\Gamma)) \text{ implies } z_{d+1}^{\alpha_{d+1}}\dots z_{d+r}^{\alpha_{d+r}} \notin \mathrm{in}_{\prec}(I(\Gamma))$.
Hence, by the same bijective map,
$ \sum_{i=d+1}^{d+r}\alpha_{i}\mathbf{m}_{i} \in \mathrm{Ap}(\Gamma,E_\Gamma).
$
\end{proof}

For an affine semigroup $\Gamma$, we consider the partial natural ordering $\preceq_{\Gamma}$ on $\mathbb{N}^{d}$, defined as $\mathbf{x}\preceq_{\Gamma} \mathbf{y}$ if $\mathbf{y}-\mathbf{x} \in \Gamma$.

\begin{theorem}
Let $\mathbb{K}[\Gamma^h]$ be Cohen-Macaulay. Then, the Cohen-Macaulay type of $\mathbb{K}[\Gamma^h]$ is $\mathrm{CMtype}(\mathbb{K}[\Gamma^h])=|\{\mathrm{max}_{\preceq_{\Gamma^h}}(\mathrm{Ap}(\Gamma^{h},E_{\Gamma^{h}}))\}|$.
\end{theorem}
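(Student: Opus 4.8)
The plan is to reduce the computation of the Cohen-Macaulay type of $\mathbb{K}[\Gamma^h]$ to a count of maximal elements of an Apéry set, exactly as is done for simplicial affine semigroups in the non-homogenized setting. First I would invoke the hypothesis that $\mathbb{K}[\Gamma^h]$ is arithmetically Cohen-Macaulay together with Theorem~\ref{Extremal rays}, which identifies $E_{\Gamma^h}=\{(\mathbf{m}_{d+r}-\mathbf{m}_i,\mathbf{m}_i),(\mathbf{m}_{d+r},\mathbf{0})\mid 1\le i\le d\}$ as a set of extremal rays of $\Gamma^h$; thus the monomials $\mathbf{s}^{\mathbf{m}_{d+r}-\mathbf{m}_1}\mathbf{t}^{\mathbf{m}_1},\dots,\mathbf{s}^{\mathbf{m}_{d+r}-\mathbf{m}_d}\mathbf{t}^{\mathbf{m}_d},\mathbf{s}^{\mathbf{m}_{d+r}}$ form a maximal regular sequence in $\mathbb{K}[\Gamma^h]$, and the quotient by the ideal they generate is the $\mathbb{K}$-vector space spanned by the classes of the Apéry set $\mathrm{Ap}(\Gamma^h,E_{\Gamma^h})$.

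Next I would use the standard fact (as in \cite{Jafari-Type}, Proposition~3.3, quoted in the Remark above) that for an arithmetically Cohen-Macaulay simplicial affine semigroup ring, the Cohen-Macaulay type equals the last Betti number, which in turn equals $\dim_{\mathbb{K}}\mathrm{Soc}\big(\mathbb{K}[\Gamma^h]/(\text{the regular sequence})\big)$. Under the bijection $\mathrm{Ap}(\Gamma^h,E_{\Gamma^h})\leftrightarrow$ (monomial basis of the Artinian reduction) the semigroup partial order $\preceq_{\Gamma^h}$ on $\mathrm{Ap}(\Gamma^h,E_{\Gamma^h})$ corresponds to divisibility-compatible multiplication in the Artinian quotient, so a class $[\mathbf{x}^{\gamma^h}]$ lies in the socle precisely when $\gamma^h$ is a $\preceq_{\Gamma^h}$-maximal element of $\mathrm{Ap}(\Gamma^h,E_{\Gamma^h})$: multiplying by any generator $(\mathbf{m}_{d+r}-\mathbf{m}_i,\mathbf{m}_i)$ or $(\mathbf{m}_{d+r},\mathbf{0})$ of $E_{\Gamma^h}$ either lands outside the Apéry set (hence is zero in the quotient) exactly when no larger Apéry element dominates $\gamma^h$. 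Therefore $\mathrm{CMtype}(\mathbb{K}[\Gamma^h])=|\{\mathrm{max}_{\preceq_{\Gamma^h}}(\mathrm{Ap}(\Gamma^h,E_{\Gamma^h}))\}|$.

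The main obstacle I anticipate is the careful verification of the socle description: I must check that for a $\preceq_{\Gamma^h}$-maximal $\gamma^h\in\mathrm{Ap}(\Gamma^h,E_{\Gamma^h})$, the product $\gamma^h+e$ with $e\in E_{\Gamma^h}$ always lands in the ideal generated by the regular sequence — equivalently, $\gamma^h+e\notin\mathrm{Ap}(\Gamma^h,E_{\Gamma^h})$ — and conversely that a non-maximal element has a nonzero multiple in the quotient that is itself an Apéry element, so is not in the socle. This requires knowing that the monomial images of $\mathrm{Ap}(\Gamma^h,E_{\Gamma^h})$ really do form a $\mathbb{K}$-basis of the Artinian reduction (which follows from Cohen-Macaulayness and the regular-sequence property), and that the multiplication table is governed purely by the semigroup operation. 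I would handle this by passing through the initial ideal: using $G(\mathrm{in}_{\prec_0}(I(\Gamma^h)))=G(\mathrm{in}_{\prec}(I(\Gamma)))$ and the description of $Q_{I(\Gamma^h)}$ above, the Artinian reduction is isomorphic (as a graded vector space with its module structure over the relevant polynomial subring) to $\mathbb{K}[z_{d+1},\dots,z_{d+r}]/\mathrm{in}_{\prec_0}(I(\Gamma^h))$ restricted appropriately, reducing the socle computation to a transparent monomial-combinatorial statement, which is then translated back through the bijection to the claimed count of $\preceq_{\Gamma^h}$-maximal Apéry elements.
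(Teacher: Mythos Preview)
Your proposal is correct and follows essentially the same approach as the paper: both use Theorem~\ref{Extremal rays} to get that the monomials indexed by $E_{\Gamma^h}$ form a monomial system of parameters (hence a regular sequence by Cohen--Macaulayness), and then both invoke \cite[Proposition~3.3]{Jafari-Type}. The paper's proof is in fact just those two observations followed by ``now the proof is similar as it is in \cite[Proposition~3.3]{Jafari-Type}''; your elaboration of the socle computation and the detour through the initial ideal are more detail than the paper gives, but they are exactly the content of the cited argument and are not a different route.
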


\begin{proof}
	Due to Lemma \ref{Extremal rays}, $\{\mathbf{x}^{\mathbf{m}}:\mathbf{m} \in E_{\Gamma^h} \}$ provides a monomial system of parameter for $\mathbb{K}[\Gamma^h]$. As $\mathbb{K}[\Gamma^h]$ is Cohen-Macaulay, $\{\mathbf{x}^{\mathbf{m}}:\mathbf{m} \in E_{\Gamma^h} \}$ forms a regular sequence in $\mathbb{K}[\Gamma^h]$. Now the proof is similar as it is in 
	\cite[Proposition 3.3]{Jafari-Type}.
\end{proof}

\section{Buchsbaum Criterion for non-Cohen Macaulay Projective Closure of Numerical Semigroup rings}
Let $\Gamma$ be a numerical semigroup minimally generated by $\{n_1,\dots,n_{r}\}\subset \mathbb{N}$, i.e $\Gamma=\langle n_1,\dots,n_r \rangle$. 
Consider a map $\phi:\mathbb{K}[x_0,\dots,x_r] \rightarrow \mathbb{K}[u^{n_r},u^{n_{r}-n_{1}}v^{n_{1}},\dots,v^{n_{r}}]$ defined by $\phi(x_{i})=u^{n_r-n_i}v^{n_i}$. Let $I_{\Gamma^h}$ be a defining ideal of $\mathbb{K}[\Gamma^h]$.
Assume that the projective closure $\mathbb{K}[\Gamma^h]$ of $\mathbb{K}[\Gamma]$ is not Cohen-Macaulay. 
\medskip

Let $T$ be a set of tuple $(\alpha,\beta)\in \mathbb{N}^2$ such that 
$u^\alpha v^\beta \in \mathbb{K}[\Gamma^h]$, and $e_{i}=(n_{r},n_{r}-n_{i}), 0\leq i \leq r$.
\medskip 
 
 Given two subset $A, B \subset \mathbb{N}^2$, define $A\pm B=\{a\pm b : a\in A, b\in B\}$. Now, define $T^{\star}=(T-e_0)\cap (T-e_1)\cap \dots \cup (T-e_{r})$. Note that $T^{\star}$ is an additive semigroup in $\mathbb{N}^2$ and finitely generated. Suppose affine semigroup $T^{\star}$ is minimally generated by $\{(n_r,0),(\alpha_1,\beta_1),\dots,(\alpha_s,\beta_s),(0,n_r)\}$. 
It is clear that that $T^{\star}$ is simplicial affine semigroup in $\mathbb{N}^{2}$. 
\medskip

Consider a map, $\phi_{T^{\star}}:\mathbb{K}[x_0,\dots,x_{s+1}] \rightarrow \mathbb{K}[T^{\star}]$, defined by $\phi_{T^{\star}}(x_{0})=u^{n_r}, \phi_{T^{\star}}(x_{j})=u^{\alpha_j}v^{\beta_j},\phi_{T^{\star}}(x_{s+1})=v^{n_r},\, 1 \leq j \leq s$. 
Let $I_{T^{\star}}$ be the defining ideal of $\mathbb{K}[T^{\star}]$. 
Let $G$ be a reduced Gr\"{o}bner basis of $\mathbb{K}[T^{\star}]$ with 
respect to degree reverse lexicographic ordering $>$ induced by 
$x_1>\dots>x_{s}>x_{s+1}>x_{0}$. By \cite[Theorem 5]{Buchsbaum-Pedro}, 
$\mathbb{K}[\Gamma^h]$ is Buchsbaum if and only if $\mathbb{K}[T^{\star}]$ is 
Cohen-Macaulay.  As $T^{\star}$ is a simplicial affine semigroup in 
$\mathbb{N}^{2}$, from \cite[Corollary 4.7]{Ojeda-Short}, 
$\mathbb{K}[T^{\star}]$ is Cohen-Macaulay if and only if $x_{0},x_{s+1}$ 
do not divide the leading monomial of any element of $G$. Thus, we deduce 
the following theorem. 
 
\begin{theorem}
$\mathbb{K}[\Gamma^h]$ is Buchsbaum if and only if $x_0,x_{s+1}$ do not divide the leading monomial of any element of $G$. 
\end{theorem}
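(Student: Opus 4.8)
The statement is essentially a restatement of the two facts already assembled in the paragraph immediately preceding it, so the proof is a matter of chaining together the cited results cleanly. The plan is to proceed in two reductions. First, I would invoke \cite[Theorem 5]{Buchsbaum-Pedro}: the projective closure $\mathbb{K}[\Gamma^h]$ of the numerical semigroup ring is Buchsbaum if and only if the semigroup ring $\mathbb{K}[T^{\star}]$ attached to the ``polar'' semigroup $T^{\star}=(T-e_0)\cap\dots\cap(T-e_r)$ is Cohen-Macaulay. This requires that the hypotheses of that theorem hold in our setting, so I would first check that $T^{\star}$ is exactly the semigroup produced by the construction in \emph{loc. cit.} — that is, that the intersection $(T-e_0)\cap\dots\cap(T-e_r)$ is the set governing the Buchsbaum defect — and record that $T^{\star}$ is a finitely generated simplicial affine semigroup in $\mathbb{N}^2$ (with $(n_r,0)$ and $(0,n_r)$ on its two extremal rays), as was already observed above.

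Second, I would apply \cite[Corollary 4.7]{Ojeda-Short} to $\mathbb{K}[T^{\star}]$: since $T^{\star}$ is simplicial in $\mathbb{N}^2$ with extremal rays spanned by $(n_r,0)$ and $(0,n_r)$, the images $x_0\mapsto u^{n_r}$ and $x_{s+1}\mapsto v^{n_r}$ form a homogeneous system of parameters, and $\mathbb{K}[T^{\star}]$ is Cohen-Macaulay precisely when $x_0,x_{s+1}$ form a regular sequence modulo $I_{T^{\star}}$, which by the criterion of \cite[Corollary 4.7]{Ojeda-Short} (passing to the initial ideal with respect to the chosen degree reverse lexicographic order $>$, in which $x_0$ and $x_{s+1}$ are the two smallest variables) holds if and only if neither $x_0$ nor $x_{s+1}$ divides the leading monomial of any element of the reduced Gröbner basis $G$. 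Concatenating the two equivalences gives the theorem.

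The only genuine content to spell out is the verification of the hypotheses of \cite[Theorem 5]{Buchsbaum-Pedro} — namely that $T^{\star}$ as \emph{defined here} (via the vectors $e_i=(n_r,\,n_r-n_i)$ and the translation/intersection operations on $T$) coincides with the auxiliary semigroup for which that theorem characterizes the Buchsbaum property, and that the non-Cohen-Macaulay hypothesis on $\mathbb{K}[\Gamma^h]$ is what makes the comparison non-vacuous. I would also note explicitly that the chosen monomial order $>$ places $x_{s+1}>x_0$ with both below $x_1,\dots,x_s$, which is the ordering required so that \cite[Corollary 4.7]{Ojeda-Short} reads off Cohen-Macaulayness from divisibility of leading monomials by exactly the two ``extremal-ray'' variables. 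Everything else is a direct citation, so I expect the main (and only mild) obstacle to be bookkeeping: making sure the indexing of the $e_i$, the generators of $T^{\star}$, and the variables $x_0,\dots,x_{s+1}$ all line up with the hypotheses of the two external results being quoted.
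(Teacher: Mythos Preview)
Your proposal is correct and is exactly the paper's own argument: the paper does not give a separate proof block but simply deduces the theorem from the preceding paragraph, chaining \cite[Theorem 5]{Buchsbaum-Pedro} (Buchsbaum $\Leftrightarrow$ $\mathbb{K}[T^{\star}]$ Cohen--Macaulay) with \cite[Corollary 4.7]{Ojeda-Short} (Cohen--Macaulay $\Leftrightarrow$ $x_0,x_{s+1}$ do not divide leading monomials in $G$). Your additional remarks about verifying hypotheses and the choice of monomial order are sensible bookkeeping but go beyond what the paper spells out.
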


\begin{example}
Let $R=\mathbb{K}[u^4,u^3v,uv^3,v^4]$. Here $ T=\langle (0,4),(3,1),(1,3),(0,4)\rangle$ and $T^{\star}$ is minimally generated by the set $\langle (0,4),(1,3),(2,2),(3,1),(4,0) \rangle$ with extremal rays $\{(0,4),(4,0)\}$. Note that $G=\{x_3^2-x_4x_0,x_2x_3-x_1x_0,x_1x_3-x_2x_4,x_2^2-x_3x_0,x_1x_2-x_4x_0,x_1^2-x_3x_4\}$ is the reduced Gr\"{o}bner basis of $I_{T_{\star}}$ with respect to degree reverse lexicographic ordering $>$ induced by $x_1>x_2>x_3>x_4>x_0$. We can see that $x_0,x_4$ do not divide the leading monomial of any element of $G$, hence $R$ is a Buchsbaum ring. 
\end{example}

\section{Lifting of affine semigroups}\label{LAS}
Let $\Gamma$ be a simplicial affine semigroup minimally generated by $\{\mathbf{a}_1,\dots,\mathbf{a}_{d+r}\} $ with the set of extremal rays $\{\mathbf{a}_1,\dots,\mathbf{a}_{d}\}$. 
The \textit{$k$ lifting of $\Gamma$} is defined as the affine semigroup $\Gamma_{k}$ minimally generated by $\{\mathbf{a}_1,\dots,\mathbf{a}_d,k\mathbf{a}_{d+1},\dots,k\mathbf{a}_{d+r}\}$. Note that $\Gamma_{k}$ is simplicial affine semigroup.
Let $x_{E}^{\alpha}=x_1^{\alpha_1}\dots x_d^{\alpha_d}$, $x_{E_c}^{\alpha'}=x_{d+1}^{\alpha_{d+1}}\dots x_{d+r}^{\alpha_{d+r}}$, where $\alpha=(\alpha_1,\dots,\alpha_d)$,  $\alpha'=(\alpha_{d+1},\dots,\alpha_{d+r})$. 

\begin{theorem}\label{Gen} 
Let $x_{E}^{\alpha}x_{E_c}^{\alpha'}-x_{E}^{\beta}x_{E_c}^{\beta'} \in I_{\Gamma}$, 
then $x_{E}^{k\alpha}x_{E_c}^{\alpha'}-x_{E}^{k\beta}x_{E_c}^{\beta'} \in I_{\Gamma_k}$. Moreover, $\mu(I_{\Gamma})=\mu(I_{\Gamma_k})$.
\end{theorem}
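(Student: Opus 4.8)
The plan is to analyze how the $k$-lifting operation acts on binomials in the defining ideal, and to show it sets up a bijection between minimal generating sets. First I would establish the first assertion by a direct computation: if $x_{E}^{\alpha}x_{E_c}^{\alpha'}-x_{E}^{\beta}x_{E_c}^{\beta'}\in I_{\Gamma}$, then $\sum_{i=1}^{d}\alpha_i\mathbf{a}_i+\sum_{j=d+1}^{d+r}\alpha'_j\mathbf{a}_j=\sum_{i=1}^{d}\beta_i\mathbf{a}_i+\sum_{j=d+1}^{d+r}\beta'_j\mathbf{a}_j$ in $\Gamma$. Multiplying the part supported on the extremal rays by $k$ is not an identity in $\Gamma$, so the honest statement is that in $\Gamma_k$, whose generators are $\mathbf{a}_1,\dots,\mathbf{a}_d, k\mathbf{a}_{d+1},\dots,k\mathbf{a}_{d+r}$, one has $\sum_{i=1}^{d}(k\alpha_i)\mathbf{a}_i+\sum_{j}\alpha'_j(k\mathbf{a}_j)=k\big(\sum_i\alpha_i\mathbf{a}_i+\sum_j\alpha'_j\mathbf{a}_j\big)=k\big(\sum_i\beta_i\mathbf{a}_i+\sum_j\beta'_j\mathbf{a}_j\big)=\sum_i(k\beta_i)\mathbf{a}_i+\sum_j\beta'_j(k\mathbf{a}_j)$; hence the lifted binomial $x_{E}^{k\alpha}x_{E_c}^{\alpha'}-x_{E}^{k\beta}x_{E_c}^{\beta'}$ lies in $I_{\Gamma_k}$. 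This gives a well-defined $\mathbb{K}$-linear map on binomials; I would note it is injective because distinct exponent pairs $(\alpha,\alpha')\neq(\beta,\beta')$ with $\alpha\neq\beta$ map to distinct lifted pairs (the $x_{E_c}$-part is untouched, and the $x_E$-part is scaled by the injective map $\alpha\mapsto k\alpha$).

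Next I would prove $\mu(I_{\Gamma})=\mu(I_{\Gamma_k})$, where $\mu$ denotes the minimal number of generators. The strategy is to transport a minimal binomial generating set of $I_{\Gamma}$ to one of $I_{\Gamma_k}$ via the lifting map, and conversely. For the inequality $\mu(I_{\Gamma_k})\leq\mu(I_{\Gamma})$: take a minimal generating set $\{f_1,\dots,f_\mu\}$ of $I_{\Gamma}$ consisting of binomials (possible since $I_{\Gamma}$ is a binomial ideal, as recalled in the introduction), and show their lifts $\{f_1^{(k)},\dots,f_\mu^{(k)}\}$ generate $I_{\Gamma_k}$. Here I would use that every element of $I_{\Gamma_k}$ is a $\mathbb{K}$-linear combination of binomials coming from relations among the generators of $\Gamma_k$, and each such relation, after dividing the extremal-ray exponents by $k$ (which is forced: any element of $\Gamma_k$ expressed in the generators has its extremal coordinates divisible by $k$ because $\Gamma\subset\sum_{i=1}^{d}\mathbb{Q}_{\geq 0}\mathbf{a}_i$ forces a unique rational expression), descends to a relation in $\Gamma$, so the binomial is a lift of an element of $I_{\Gamma}$. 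The reverse inequality $\mu(I_{\Gamma})\leq\mu(I_{\Gamma_k})$ runs symmetrically using the inverse map $(\alpha,\alpha')\mapsto$ (divide the $x_E$-exponents by $k$, keep $x_{E_c}$-exponents), well-defined on binomials in $I_{\Gamma_k}$ by the same divisibility observation. Combining the two inequalities yields equality.

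A cleaner packaging, which I would prefer if it goes through smoothly, is to make the correspondence at the level of the polynomial rings: the lifting induces a graded (under a suitable $\mathbb{N}$-grading, or at least a filtered) isomorphism-like correspondence, or at minimum a $\mathbb{K}[x_E]$-module map sending $I_{\Gamma}$ onto $I_{\Gamma_k}$ after the substitution $x_i\mapsto x_i^{k}$ for $i\leq d$. Concretely, the ring homomorphism $A=\mathbb{K}[x_1,\dots,x_{d+r}]\to A$, $x_i\mapsto x_i^{k}$ for $1\le i\le d$ and $x_j\mapsto x_j$ for $j>d$, carries $I_{\Gamma}$ into $I_{\Gamma_k}$, and one argues that it carries a minimal generating set to a minimal generating set by a Nakayama/graded-minimality argument, since the number of minimal generators can be read off from $\dim_{\mathbb{K}} I/\mathfrak{m}I$ in each relevant degree.

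The main obstacle I anticipate is the surjectivity direction: showing that the lifts of generators of $I_{\Gamma}$ actually generate all of $I_{\Gamma_k}$, not merely a subideal. This requires the structural fact that in $\Gamma_k$ every expression $\sum c_i\mathbf{a}_i + \sum c'_j (k\mathbf{a}_j)$ equalling another such expression forces, modulo the extremal directions being a $\mathbb{Q}$-basis, that the two expressions differ only by a relation already present in $\Gamma$ scaled appropriately — i.e. that no "new" relations are created by the lifting. Establishing that the relation module of $\Gamma_k$ is, up to the $\alpha\mapsto k\alpha$ reparametrization, identical to that of $\Gamma$ is the crux; once that is in hand, minimality of generators (and in fact equality of all Betti numbers, as claimed later in the introduction) follows. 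I would handle it by fixing the unique rational representation of any $\gamma\in\mathrm{cone}(\Gamma)$ in the basis $\mathbf{a}_1,\dots,\mathbf{a}_d$ of extremal rays and tracking how integrality constraints interact with the factor $k$, which is exactly the simpliciality hypothesis being used.
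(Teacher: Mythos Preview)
Your computation for the first assertion is correct and matches the paper exactly.

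For $\mu(I_{\Gamma})=\mu(I_{\Gamma_k})$, however, your main line of argument has a real gap. You propose to show that the lifted binomials generate $I_{\Gamma_k}$ by arguing that every (coprime) binomial in $I_{\Gamma_k}$ already has its $x_E$-exponents divisible by $k$, so that it ``descends'' to a binomial in $I_{\Gamma}$. This divisibility claim is false. Take $\Gamma=\langle 6,10,11\rangle\subset\mathbb{N}$ with extremal ray $6$ and $k=2$, so $\Gamma_2=\langle 6,20,22\rangle$ (which is minimally generated). Then $x_3^3-x_1^{11}\in I_{\Gamma_2}$ since $3\cdot 22=11\cdot 6$, the two monomials are coprime, yet the extremal exponent $11$ is odd. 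Your appeal to the unique rational expression in the extremal basis only gives $\gamma_i-\delta_i=k\sum_j(\delta'_j-\gamma'_j)q_{ij}$ with $q_{ij}\in\mathbb{Q}$, which does not force $k\mid\gamma_i$. So the descent map you want on individual binomials is not well defined, and your surjectivity argument collapses.

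The paper bypasses this entirely via the ring map you yourself write down in your ``cleaner packaging'': $f\colon A\to A$, $x_i\mapsto x_i^{k}$ for $i\le d$ and $x_j\mapsto x_j$ for $j>d$. The key observation you are missing is that $f$ is \emph{faithfully flat} --- in fact $A$ is free of rank $k^{d}$ over $f(A)$, with basis the monomials $x_E^{\epsilon}$, $0\le\epsilon_i<k$. One checks (using the commuting square $\phi_k\circ f=h\circ\phi$ with $h\colon s_l\mapsto s_l^{k}$ injective) that $\mathbb{K}[\Gamma]\otimes_{A,f}A\cong\mathbb{K}[\Gamma_k]$, so tensoring a minimal $\Gamma$-graded free resolution $\mathbb{F}$ of $\mathbb{K}[\Gamma]$ along $f$ produces a minimal $\Gamma_k$-graded free resolution of $\mathbb{K}[\Gamma_k]$. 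This gives $\beta_i(\mathbb{K}[\Gamma])=\beta_i(\mathbb{K}[\Gamma_k])$ for every $i$ in one stroke, and in particular $\mu(I_{\Gamma})=\mu(I_{\Gamma_k})$. Your Nakayama idea is in the right spirit, but faithful flatness is what makes the transport of minimality automatic and avoids the binomial-by-binomial bookkeeping where your argument breaks down.
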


\begin{proof}
As $\sum_{i=1}^{d}\alpha_i \mathbf{a}_i+\sum_{i=d+1}^{d+r}\alpha_i k\mathbf{a}_i -k\sum_{i=1}^{d}\beta_i \mathbf{a}_i+\sum_{i=d+1}^{d+r}\beta_i k\mathbf{a}_i=0$, we have $k\sum_{i=1}^{d}\alpha_i \mathbf{a}_i+\sum_{i=d+1}^{d+r}\alpha_i k\mathbf{a}_i -k\sum_{i=1}^{d}\beta_i \mathbf{a}_i+\sum_{i=d+1}^{d+r}\beta_i k\mathbf{a}_i=0 $. Hence, $x_{E}^{k\alpha}x_{E_c}^{\alpha'}-x_{E}^{k\beta}x_{E_c}^{\beta'} \in I_{\Gamma_k}$.
\medskip

Note that $A=\mathbb{K}[x_1,\dots,x_{d+r}]$ is graded over both $\Gamma$ and  
$\Gamma_k$, via $\mathrm{deg}_{\Gamma}(x_i)=\mathrm{deg}_{\Gamma_k}(x_i)=\mathbf{a}_{i}$, 
for $1\leq i \leq d$, and $\mathrm{deg}_{\Gamma_k}(x_i)=k\mathbf{a}_{i}$, for 
$d+1\leq i \leq d+r$, respectively. Consider a $\mathbb{K}$-algebra homomorphism $f:A \rightarrow A$ 
defined by 
$x_{i} \rightarrow x_{i}^k $ for $1\leq i \leq d$ and  $x_{i} \rightarrow x_{i} $ 
for $d+1\leq i \leq d+r$. Since $f$ is faithfully flat extension, $\mathbb{F} \otimes A$ 
gives the minimal $\Gamma_k$-graded free resolution of $\mathbb{K}[\Gamma_k]$, 
where $\mathbb{F}$ is a minimal $\Gamma$-graded free resolution of $\mathbb{K}[\Gamma]$. 
Hence, Betti numbers of $\mathbb{K}[\Gamma]$ match with the Betti numbers of $\mathbb{K}[\Gamma_k]$, i.e.,  $\beta_i(\mathbb{K}[\Gamma])=\beta_i(\mathbb{K}[\Gamma_k])$. In particular, $\mu(I_{\Gamma})=\mu(I_{\Gamma_k})$.
\end{proof}

\begin{corollary}\label{CM}
If $\mathbb{K}[\Gamma]$ is Cohen-Macaulay (respectively Gorenstein), 
then $\mathbb{K}[\Gamma_k]$ is Cohen-Macaulay (respectively Gorenstein).
\end{corollary}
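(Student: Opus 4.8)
The plan is to read everything off from the faithfully flat $\mathbb{K}$-algebra homomorphism $f\colon A\to A$ constructed in the proof of Theorem~\ref{Gen}. First I would record that $\Gamma$ and $\Gamma_k$ are both simplicial affine semigroups sharing the extremal rays $\{\mathbf{a}_1,\dots,\mathbf{a}_d\}$ and both minimally generated by $d+r$ elements, so $\dim\mathbb{K}[\Gamma]=\dim\mathbb{K}[\Gamma_k]=d$ and hence $\operatorname{codim}_A I_\Gamma=\operatorname{codim}_A I_{\Gamma_k}=r$. The proof of Theorem~\ref{Gen} already establishes that base change along $f$ carries a minimal $\Gamma$-graded free resolution $\mathbb{F}$ of $\mathbb{K}[\Gamma]$ to a minimal $\Gamma_k$-graded free resolution $\mathbb{F}\otimes_A A$ of $\mathbb{K}[\Gamma_k]$; in particular the two resolutions have the same length and the same graded Betti numbers, which is the ingredient I will use throughout.

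For the Cohen-Macaulay statement I would invoke the Auslander--Buchsbaum formula: $\operatorname{pd}_A\mathbb{K}[\Gamma]=(d+r)-\operatorname{depth}\mathbb{K}[\Gamma]$ and likewise for $\Gamma_k$. Since the minimal resolutions of $\mathbb{K}[\Gamma]$ and $\mathbb{K}[\Gamma_k]$ have equal length, $\operatorname{pd}_A\mathbb{K}[\Gamma]=\operatorname{pd}_A\mathbb{K}[\Gamma_k]$, whence $\operatorname{depth}\mathbb{K}[\Gamma]=\operatorname{depth}\mathbb{K}[\Gamma_k]$. Combined with $\dim\mathbb{K}[\Gamma]=\dim\mathbb{K}[\Gamma_k]$, this yields that $\mathbb{K}[\Gamma]$ is Cohen-Macaulay (equivalently $\operatorname{depth}=\dim$) if and only if $\mathbb{K}[\Gamma_k]$ is. For the Gorenstein statement I would use that a graded Cohen-Macaulay quotient of a polynomial ring is Gorenstein precisely when its top (last nonzero) Betti number equals $1$, since in that case the canonical module, computed from the dual of the minimal free resolution, is free of rank one. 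As $\beta_i(\mathbb{K}[\Gamma])=\beta_i(\mathbb{K}[\Gamma_k])$ for all $i$ by Theorem~\ref{Gen}, the top Betti numbers agree; together with the Cohen-Macaulay equivalence just obtained, $\mathbb{K}[\Gamma]$ is Gorenstein if and only if $\mathbb{K}[\Gamma_k]$ is. (Alternatively, the Cohen-Macaulay half could be deduced from the theorem on faithfully flat local homomorphisms applied to $f$, whose closed fibre $A/f(\mathfrak{m})A$ is Artinian and hence Cohen-Macaulay.)

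I do not expect a genuine obstacle: the corollary is essentially a formal consequence of Theorem~\ref{Gen}. The one point deserving a line of justification is that $\mathbb{F}\otimes_A A$ is not merely a resolution but a \emph{minimal} one — that is, that base change along $f$ preserves minimality — but this holds because $f$ sends the irrelevant maximal ideal $\mathfrak{m}=(x_1,\dots,x_{d+r})$ of $A$ into itself, so the entries of the differentials of $\mathbb{F}$, which lie in $\mathfrak{m}$, remain in $\mathfrak{m}$ after applying $f$. This is exactly the fact already used in proving $\beta_i(\mathbb{K}[\Gamma])=\beta_i(\mathbb{K}[\Gamma_k])$, so it may simply be cited from the proof of Theorem~\ref{Gen}.
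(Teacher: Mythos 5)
Your argument is correct and is essentially the one the paper intends: the corollary is stated without proof as an immediate consequence of Theorem~\ref{Gen}, whose faithfully flat base change gives equal projective dimensions and Betti numbers, from which Cohen--Macaulayness follows by Auslander--Buchsbaum (dimensions being equal to $d$ on both sides) and Gorensteinness from the last Betti number being $1$. Your added remark on why minimality of $\mathbb{F}\otimes_A A$ is preserved is the right justification and matches what the proof of Theorem~\ref{Gen} already uses.
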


\begin{remark}
It is clear that we can get a generating set of $I_{\Gamma_k}$ from generating set of $I_{\Gamma}$ by taking $k^{th}$ power of variable corresponding to extremal rays.
\end{remark}

Let $\prec$ denotes the reverse lexicographic ordering on $\mathbb{K}[x_1,\dots,x_{d+r}]$ induced by $x_{1}\prec \dots \prec x_{d+r}$.

\begin{lemma}
    If $\mathbb{K}[\Gamma]$ is Cohen-Macaulay then $|\mathrm{Ap}(\Gamma_k,E)|=|\mathrm{Ap}(\Gamma,E)|$.
\end{lemma}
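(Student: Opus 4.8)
The plan is to connect the size of the Ap\'ery set to the Gr\"obner-basis data that was already shown to be preserved by $k$-lifting. Recall from the discussion before Theorem~\ref{Extremal rays} that, for a simplicial affine semigroup with set of extremal rays $E$, there is a bijection between $\mathrm{Ap}(\Gamma,E)$ and the set $Q_{I(\Gamma)}=\{(\alpha_{d+1},\dots,\alpha_{d+r})\in\mathbb{N}^r : x_{d+1}^{\alpha_{d+1}}\cdots x_{d+r}^{\alpha_{d+r}}\notin \mathrm{in}_{\prec}(I_\Gamma)\}$, given by $(\alpha_{d+1},\dots,\alpha_{d+r})\mapsto \sum_{i=d+1}^{d+r}\alpha_i\mathbf{a}_i$; the same statement holds verbatim for $\Gamma_k$. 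So it suffices to show $|Q_{I_\Gamma}|=|Q_{I_{\Gamma_k}}|$, i.e. that the two initial ideals have the same ``pure-power-in-the-non-extremal-variables'' staircase. In fact I expect the stronger statement that the monomials in $x_{d+1},\dots,x_{d+r}$ lying outside $\mathrm{in}_{\prec}(I_\Gamma)$ coincide exactly with those outside $\mathrm{in}_{\prec}(I_{\Gamma_k})$, so the bijection between the two Ap\'ery sets is just the identity on exponent vectors $\alpha'$.

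First I would invoke Cohen-Macaulayness: since $\mathbb{K}[\Gamma]$ is Cohen-Macaulay, by \cite[Corollary 4.7]{Ojeda-Short} none of the extremal-ray variables $x_1,\dots,x_d$ divides any element of $G(\mathrm{in}_{\prec}(I_\Gamma))$; hence every minimal generator of $\mathrm{in}_{\prec}(I_\Gamma)$ is a monomial purely in $x_{d+1},\dots,x_{d+r}$. By Corollary~\ref{CM}, $\mathbb{K}[\Gamma_k]$ is also Cohen-Macaulay, so the same conclusion holds for $\mathrm{in}_{\prec}(I_{\Gamma_k})$. Next I would take a reduced Gr\"obner basis $\mathcal{G}$ of $I_\Gamma$ with respect to $\prec$; because the leading terms involve only the non-extremal variables, and because (by the Remark after Corollary~\ref{CM} / Theorem~\ref{Gen}) replacing each extremal-ray variable $x_i$ by $x_i^k$ for $1\le i\le d$ carries a generating set of $I_\Gamma$ to one of $I_{\Gamma_k}$, I would argue that the image $\mathcal{G}_k$ of $\mathcal{G}$ under this substitution is a Gr\"obner basis of $I_{\Gamma_k}$ with the same leading terms. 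Concretely: the substitution $x_i\mapsto x_i^k$ ($1\le i\le d$) fixes the reverse-lexicographic comparison among monomials supported on $x_{d+1},\dots,x_{d+r}$, so each element of $\mathcal{G}_k$ has the same leading monomial as the corresponding element of $\mathcal{G}$; then an $S$-pair / Buchberger argument (using that the leading terms are pairwise in the non-extremal variables and reduce to zero in $I_\Gamma$, hence their substituted versions reduce to zero in $I_{\Gamma_k}$) shows $\mathcal{G}_k$ is a Gr\"obner basis. Therefore $\mathrm{in}_{\prec}(I_{\Gamma_k})=\mathrm{in}_{\prec}(I_\Gamma)$ as ideals, so $Q_{I_\Gamma}=Q_{I_{\Gamma_k}}$ and the two Ap\'ery sets have equal cardinality.

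The main obstacle is the Gr\"obner-basis step: justifying that $\mathcal{G}_k$ is actually a Gr\"obner basis of $I_{\Gamma_k}$ and not merely a generating set, since in general applying a ring endomorphism to a Gr\"obner basis need not yield one. The Cohen-Macaulay hypothesis is exactly what makes this work — it forces all leading terms to avoid $x_1,\dots,x_d$, so the substitution $x_i\mapsto x_i^k$ acts trivially on leading terms and monomially on the relevant staircase, and Buchberger's criterion can be checked by transporting the reductions from $I_\Gamma$. An alternative, cleaner route avoiding a hands-on $S$-pair computation is to combine the $\Gamma_k$-graded free resolution of $\mathbb{K}[\Gamma_k]$ obtained in Theorem~\ref{Gen} (whose graded Betti numbers equal those of $\mathbb{K}[\Gamma]$) with the Cohen-Macaulay-ness of both rings: since $\{x_1,\dots,x_d\}$ is a regular sequence on each, one gets $|\mathrm{Ap}(\Gamma,E)|=\dim_{\mathbb{K}}\bigl(\mathbb{K}[\Gamma]/(x_1^{\mathbf{a}_1},\dots,x_d^{\mathbf{a}_d})\bigr)$ as a multiplicity computable from the Hilbert series, and the faithful flatness of $f$ in Theorem~\ref{Gen} shows this number is unchanged for $\Gamma_k$. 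Either way the core point is that $k$-lifting only rescales the extremal-ray directions, and under the Cohen-Macaulay assumption the Ap\'ery-set combinatorics live entirely in the complementary directions, which are untouched.
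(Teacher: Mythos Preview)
Your first approach has a genuine gap at the step you flagged yourself. You assert that each element of $\mathcal{G}_k$ has the same leading monomial as the corresponding element of $\mathcal{G}$, arguing that $\sigma:x_i\mapsto x_i^k$ ($1\le i\le d$) fixes monomials supported on $x_{d+1},\dots,x_{d+r}$. But the \emph{non-leading} terms of a reduced Gr\"obner basis element can involve $x_1,\dots,x_d$, and after $\sigma$ such a term can jump in total degree past the old leading term. For a concrete toric instance, take $d=1$, $r=2$, $\prec$ degrevlex with $x_1\prec x_2\prec x_3$, and $g=x_2^2-x_1x_3$ (which occurs in the Gr\"obner basis of the toric ideal of $\langle 3,4,5\rangle$); here $\mathrm{LT}(g)=x_2^2$, but $\sigma(g)=x_2^2-x_1^kx_3$ has leading term $x_1^kx_3$ for $k\ge 2$. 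So $\mathcal{G}_k$ is in general \emph{not} a Gr\"obner basis of $I_{\Gamma_k}$ with the same leading terms, and the Buchberger transport you sketch does not go through.

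The paper's proof bypasses the Gr\"obner-basis transport entirely and is much shorter. Since a generating set of $I_{\Gamma_k}$ is obtained from one of $I_\Gamma$ by applying $\sigma$ (Theorem~\ref{Gen} and the Remark following Corollary~\ref{CM}), and since $\sigma$ becomes the identity once $x_1,\dots,x_d$ are set to $0$, one has the equality of ideals
\[
I_{\Gamma}+(x_1,\dots,x_d)=I_{\Gamma_k}+(x_1,\dots,x_d),
\]
hence $\dim_{\mathbb{K}}A/(I_\Gamma+(x_1,\dots,x_d))=\dim_{\mathbb{K}}A/(I_{\Gamma_k}+(x_1,\dots,x_d))$. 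By \cite[Theorem~3.3]{Ojeda-Short} these dimensions are $|\mathrm{Ap}(\Gamma,E)|$ and $|\mathrm{Ap}(\Gamma_k,E)|$ respectively. Your alternative route via faithful flatness and Hilbert series is gesturing at this same identity, but as written it does not quite land: base-changing $A/(I_\Gamma+(x_1,\dots,x_d))$ along $f$ yields $A/(I_{\Gamma_k}+(x_1^k,\dots,x_d^k))$, not $A/(I_{\Gamma_k}+(x_1,\dots,x_d))$, so faithful flatness alone gives the wrong comparison. The one-line ideal equality above is what you actually need.
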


\begin{proof}
From Theorem \ref{Gen}, it is clear that $\mathrm{dim}_{\mathbb{K}}\frac{\mathbb{K}[x_1,\dots,x_{d+r}]}{I_{\Gamma}+(x_1,\dots,x_d)}=\mathrm{dim}_{\mathbb{K}}\frac{\mathbb{K}[x_1,\dots,x_{d+r}]}{I_{\Gamma_{k}}+(x_1,\dots,x_d)}$. By \cite[Theorem 3.3]{Ojeda-Short}, we have $|\mathrm{Ap}(\Gamma_k,E)|=|\mathrm{Ap}(\Gamma,E)|$. 
\end{proof}

\begin{theorem}
    If $\mathbb{K}[\Gamma]$ is Cohen-Macaulay then $\mathrm{Ap}(\Gamma_k,E)=\{k\mathbf{b} : \mathbf{b} \in \mathrm{Ap}(\Gamma_k,E)\}$.
\end{theorem}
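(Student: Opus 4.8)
The plan is to deduce the equality from a single elementary inclusion together with the cardinality count already recorded in the preceding Lemma. As preparation I would note that, by Corollary~\ref{CM}, $\mathbb{K}[\Gamma_k]$ is again Cohen--Macaulay, and that both $\mathrm{Ap}(\Gamma,E)$ and $\mathrm{Ap}(\Gamma_k,E)$ are finite: since $\mathrm{cone}(\Gamma_k)=\sum_{i=1}^{d}\mathbb{R}_{\geq 0}\mathbf{a}_i=\mathrm{cone}(\Gamma)$, the monomials $x^{\mathbf{a}_1},\dots,x^{\mathbf{a}_d}$ cut out an artinian quotient of $\mathbb{K}[\Gamma_k]$ (resp.\ of $\mathbb{K}[\Gamma]$) whose $\mathbb{K}$-basis is indexed by $\mathrm{Ap}(\Gamma_k,E)$ (resp.\ by $\mathrm{Ap}(\Gamma,E)$).

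First I would establish $\mathrm{Ap}(\Gamma_k,E)\subseteq\{k\mathbf{b}:\mathbf{b}\in\mathrm{Ap}(\Gamma,E)\}$ directly. Pick $\gamma\in\mathrm{Ap}(\Gamma_k,E)$ and write it as an $\mathbb{N}$-combination of the minimal generators $\mathbf{a}_1,\dots,\mathbf{a}_d,k\mathbf{a}_{d+1},\dots,k\mathbf{a}_{d+r}$ of $\Gamma_k$. This combination cannot involve any $\mathbf{a}_i$ with $i\leq d$, since otherwise $\gamma-\mathbf{a}_i\in\Gamma_k$, contradicting $\gamma\in\mathrm{Ap}(\Gamma_k,E)$; hence $\gamma=k\mathbf{b}$ with $\mathbf{b}=\sum_{j=d+1}^{d+r}\beta_j\mathbf{a}_j\in\Gamma$. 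To see $\mathbf{b}\in\mathrm{Ap}(\Gamma,E)$, suppose $\mathbf{b}-\mathbf{a}_i\in\Gamma$ for some $i\leq d$; multiplying an expression of $\mathbf{b}-\mathbf{a}_i$ over the generators of $\Gamma$ by $k$ shows $k\mathbf{b}-k\mathbf{a}_i\in\Gamma_k$, so $\gamma-\mathbf{a}_i=(k\mathbf{b}-k\mathbf{a}_i)+(k-1)\mathbf{a}_i\in\Gamma_k$, a contradiction.

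It then remains to observe that this inclusion is an equality. The map $\mathbf{b}\mapsto k\mathbf{b}$ is injective, so $\lvert\{k\mathbf{b}:\mathbf{b}\in\mathrm{Ap}(\Gamma,E)\}\rvert=\lvert\mathrm{Ap}(\Gamma,E)\rvert$, and the preceding Lemma gives $\lvert\mathrm{Ap}(\Gamma_k,E)\rvert=\lvert\mathrm{Ap}(\Gamma,E)\rvert$; two finite sets of equal cardinality, one contained in the other, coincide. The step I expect to be the genuine obstacle is the reverse containment $\{k\mathbf{b}:\mathbf{b}\in\mathrm{Ap}(\Gamma,E)\}\subseteq\mathrm{Ap}(\Gamma_k,E)$ if one insists on a direct proof: it forces one to divide an integral relation among $\mathbf{a}_1,\dots,\mathbf{a}_{d+r}$ by $k$, which is not legitimate coordinatewise, and this is precisely the point where the Cohen--Macaulay hypothesis (through the Lemma) is indispensable. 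An alternative route with the same content is to use the bijection (valid in the Cohen--Macaulay case) between $\mathrm{Ap}(\Gamma,E)$ and the exponent vectors $\alpha'=(\alpha_{d+1},\dots,\alpha_{d+r})$ with $x_{d+1}^{\alpha_{d+1}}\cdots x_{d+r}^{\alpha_{d+r}}\notin\mathrm{in}_{\prec}I_\Gamma$, together with the Remark after Corollary~\ref{CM} that the generators of $I_{\Gamma_k}$ arise from those of $I_\Gamma$ by raising the extremal-ray variables to the $k$-th power, whence $I_{\Gamma_k}+(x_1,\dots,x_d)=I_\Gamma+(x_1,\dots,x_d)$ and the two families of standard monomials in $x_{d+1},\dots,x_{d+r}$ coincide; the corresponding bijections then differ only by the rescaling $\mathbf{a}_j\mapsto k\mathbf{a}_j$, which yields the claim.
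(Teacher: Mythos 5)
Your proposal is correct and matches the paper's own argument essentially step for step: one shows $\mathrm{Ap}(\Gamma_k,E)\subseteq\{k\mathbf{b}:\mathbf{b}\in\mathrm{Ap}(\Gamma,E)\}$ by noting an Ap\'ery element of $\Gamma_k$ cannot involve the extremal generators and then passing the condition $\mathbf{b}-\mathbf{a}_i\notin\Gamma$ back and forth via multiplication by $k$, and one concludes equality from the cardinality identity $|\mathrm{Ap}(\Gamma_k,E)|=|\mathrm{Ap}(\Gamma,E)|$ of the preceding Lemma together with injectivity of $\mathbf{b}\mapsto k\mathbf{b}$. Your version is in fact slightly more careful than the paper's (making explicit the step $\gamma-\mathbf{a}_i=(k\mathbf{b}-k\mathbf{a}_i)+(k-1)\mathbf{a}_i$ and the finiteness of the Ap\'ery sets), and it correctly reads the right-hand side of the statement as $\{k\mathbf{b}:\mathbf{b}\in\mathrm{Ap}(\Gamma,E)\}$, as the paper's proof intends.
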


\begin{proof} Let $\mathbf{b}_k \in \mathrm{Ap}(\Gamma_k,E)$, then $\mathbf{b}_k-\mathbf{a}_i \notin \Gamma_k, \forall i=1,\dots,d$. We can write $\mathbf{b}_k=k\mathbf{a}$, where $\mathbf{a} \in \langle \mathbf{a}_{d+1},\dots,\mathbf{a}_{d+r}\rangle$. So, $k\mathbf{a}-\mathbf{a}_i \notin \Gamma_k$ implies that $k\mathbf{a}-k\mathbf{a}_i \notin \Gamma_k$. Hence,  $\mathbf{a}-\mathbf{a}_i \notin \Gamma$, and $\mathbf{a}\in \mathrm{Ap}(\Gamma,E)$. Since, 
    $|\mathrm{Ap}(\Gamma_k,E)|=|\mathrm{Ap}(\Gamma,E)|$, therefore reverse equality follows.
\end{proof}

\begin{definition}
\emph{A semigroup $\Gamma$ is said to be \emph{of homogeneous type} if 
$\beta_{i}(\mathbb{K}[\Gamma])=\beta_{i}(\mathrm{gr}_{\mathfrak{m}}(\mathbb{K}[\Gamma]))$ for all $i \geq 1$}. 
\end{definition}

\begin{theorem}
Let $\mathrm{gr}_{\mathfrak{m}}(\mathbb{K} [\Gamma])$ be Cohen-Macaulay. If 
$\Gamma$ is of homogeneous type then $\Gamma_k$ is of homogeneous type.
\end{theorem}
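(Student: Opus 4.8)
The plan is to run the whole comparison through the Artinian reduction modulo the extremal monomials, after noting that this reduction does not see the $k$-lifting. By Theorem~\ref{Gen} and the hypothesis that $\Gamma$ is of homogeneous type, $\beta_i(\mathbb{K}[\Gamma_k])=\beta_i(\mathbb{K}[\Gamma])=\beta_i(\mathrm{gr}_{\mathfrak{m}}(\mathbb{K}[\Gamma]))$ for every $i\ge 1$, so it suffices to prove
\[
\beta_i\big(\mathrm{gr}_{\mathfrak{m}}(\mathbb{K}[\Gamma_k])\big)=\beta_i\big(\mathrm{gr}_{\mathfrak{m}}(\mathbb{K}[\Gamma])\big)\qquad\text{for all }i.
\]
(The inequality $\beta_i(\mathbb{K}[\Gamma_k])\le\beta_i(\mathrm{gr}_{\mathfrak{m}}(\mathbb{K}[\Gamma_k]))$ always holds, so in principle one only needs the upper bound; the argument below in fact produces the equality directly.)

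First I would record the Cohen--Macaulay bookkeeping. Since $\mathrm{gr}_{\mathfrak{m}}(\mathbb{K}[\Gamma])$ is Cohen--Macaulay, so is $\mathbb{K}[\Gamma]$ (depth cannot increase on passing to the associated graded ring), hence so is $\mathbb{K}[\Gamma_k]$ by Corollary~\ref{CM}. As $\mathrm{cone}(\Gamma)=\mathrm{cone}(\Gamma_k)$, the set $E=\{\mathbf{a}_1,\dots,\mathbf{a}_d\}$ consists of extremal rays for both semigroups; thus $x_1,\dots,x_d$ (elements of $\mathfrak{m}$-adic order one in $\mathbb{K}[\Gamma]$ and in $\mathbb{K}[\Gamma_k]$) form a regular sequence in both rings, and, since the extremal monomials form a reduction of $\mathfrak{m}$, the images of $x_1,\dots,x_d$ form a homogeneous system of parameters of $\mathrm{gr}_{\mathfrak{m}}(\mathbb{K}[\Gamma])$ and of $\mathrm{gr}_{\mathfrak{m}}(\mathbb{K}[\Gamma_k])$. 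Set $B=\mathbb{K}[\Gamma]/(x_1,\dots,x_d)$ and $B_k=\mathbb{K}[\Gamma_k]/(x_1,\dots,x_d)$; both are Artinian. The key observation is that $B=B_k$: writing $B=\mathbb{K}[x_{d+1},\dots,x_{d+r}]/J$ and $B_k=\mathbb{K}[x_{d+1},\dots,x_{d+r}]/J_k$, the ideals $J$ and $J_k$ are the images of $I_\Gamma$, respectively $I_{\Gamma_k}$, under $x_1=\dots=x_d=0$, and by the Remark after Theorem~\ref{Gen} a generating set of $I_{\Gamma_k}$ is obtained from one of $I_\Gamma$ by the substitution $x_i\mapsto x_i^{k}$ for $i\le d$. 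Setting $x_1=\dots=x_d=0$ annihilates a monomial exactly when it involves some $x_i$ with $i\le d$, a condition unchanged by $x_i\mapsto x_i^{k}$; hence the images of the generators of $I_\Gamma$ and of $I_{\Gamma_k}$ coincide, so $J=J_k$. (Equivalently this follows from the preceding theorem, $\mathrm{Ap}(\Gamma_k,E)=k\cdot\mathrm{Ap}(\Gamma,E)$, which matches the two multiplication tables.)

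Next I would pass to $A'=\mathbb{K}[x_{d+1},\dots,x_{d+r}]$. Because $\mathrm{gr}_{\mathfrak{m}}(\mathbb{K}[\Gamma])$ is Cohen--Macaulay and $x_1,\dots,x_d$ is a system of parameters on it, $x_1,\dots,x_d$ is a regular sequence on $\mathrm{gr}_{\mathfrak{m}}(\mathbb{K}[\Gamma])$; since a regular sequence of order-one elements whose initial forms remain regular on the associated graded ring is superficial, $\mathrm{gr}_{\mathfrak{m}}(\mathbb{K}[\Gamma])/(x_1,\dots,x_d)\cong\mathrm{gr}_{\overline{\mathfrak{m}}}(B)$, whence $\beta_i(\mathrm{gr}_{\mathfrak{m}}(\mathbb{K}[\Gamma]))=\beta_i^{A'}(\mathrm{gr}_{\overline{\mathfrak{m}}}(B))$. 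The identical computation applies to $\Gamma_k$ once one knows that $\mathrm{gr}_{\mathfrak{m}}(\mathbb{K}[\Gamma_k])$ is Cohen--Macaulay, giving $\beta_i(\mathrm{gr}_{\mathfrak{m}}(\mathbb{K}[\Gamma_k]))=\beta_i^{A'}(\mathrm{gr}_{\overline{\mathfrak{m}}}(B_k))$. As $B=B_k$, the two right-hand sides are equal, so $\beta_i(\mathrm{gr}_{\mathfrak{m}}(\mathbb{K}[\Gamma_k]))=\beta_i(\mathrm{gr}_{\mathfrak{m}}(\mathbb{K}[\Gamma]))$ for all $i$; combined with the first paragraph this yields $\beta_i(\mathbb{K}[\Gamma_k])=\beta_i(\mathrm{gr}_{\mathfrak{m}}(\mathbb{K}[\Gamma_k]))$ for all $i\ge1$, i.e.\ $\Gamma_k$ is of homogeneous type.

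The step carrying all the weight is the Cohen--Macaulayness of $\mathrm{gr}_{\mathfrak{m}}(\mathbb{K}[\Gamma_k])$, the $d\ge 2$ analogue of \c{S}ahin's theorem. I would prove it via the Gr\"obner-basis criterion of Section~3: $\mathrm{gr}_{\mathfrak{m}}(\mathbb{K}[\Gamma_k])$ is Cohen--Macaulay iff none of $x_1,\dots,x_d$ divides the leading monomial of an element of a reduced standard basis of $I_{\Gamma_k}$ with respect to the local degree reverse lexicographic order, and one must show this passes from $\Gamma$ to $\Gamma_k$. The subtlety is that $x_i\mapsto x_i^{k}$ ($i\le d$) is not degree-preserving, so the lowest-degree form of a lifted binomial $x_E^{k\alpha}x_{E_c}^{\alpha'}-x_E^{k\beta}x_{E_c}^{\beta'}$ need not be the lift of the lowest-degree form of $x_E^{\alpha}x_{E_c}^{\alpha'}-x_E^{\beta}x_{E_c}^{\beta'}$: a balanced binomial may become unbalanced, so the tangent cone itself changes shape. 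One has to argue that, when $\mathrm{gr}_{\mathfrak{m}}(\mathbb{K}[\Gamma])$ is Cohen--Macaulay — so that no extremal variable occurs in the relevant leading monomials for $\Gamma$ — the scaling can only push extremal variables out of leading monomials, and that Buchberger's test for the tangent-cone ideal of $\Gamma_k$ closes up without ever producing a leading monomial divisible by some $x_i$, $i\le d$. The remaining ingredient, the superficiality identification $\mathrm{gr}_{\mathfrak{m}}(\mathbb{K}[\Gamma])/(x_1,\dots,x_d)\cong\mathrm{gr}_{\overline{\mathfrak{m}}}(B)$, is standard.
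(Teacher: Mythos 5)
Your route is essentially the paper's: kill the extremal variables, observe that the Artinian reductions agree (your $B=B_k$, i.e.\ $\pi_d(I_{\Gamma})=\pi_d(I_{\Gamma_k})$ — which you obtain cleanly from the Remark after Theorem~\ref{Gen}, where the paper argues it via a dichotomy on the defining binomials of $I_\Gamma$), identify $\mathrm{gr}_{\mathfrak{m}}(\mathbb{K}[\Gamma])/(x_1,\dots,x_d)$ with the associated graded ring of the reduction (the paper cites Lemma~3 of \cite{Saha-Associated} for precisely this isomorphism), transfer Betti numbers across the regular sequence of degree-one elements, and close with $\beta_i(\mathbb{K}[\Gamma])=\beta_i(\mathbb{K}[\Gamma_k])$ from Theorem~\ref{Gen}. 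Up to the reorganization, this is the same decomposition and the same chain of equalities as in the paper.

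The genuine gap is the one you flag yourself: the Cohen--Macaulayness of $\mathrm{gr}_{\mathfrak{m}}(\mathbb{K}[\Gamma_k])$. Every statement in your argument that concerns $\Gamma_k$ — that the images of $x_1,\dots,x_d$ are a regular sequence on $\mathrm{gr}_{\mathfrak{m}}(\mathbb{K}[\Gamma_k])$, hence $\mathrm{gr}_{\mathfrak{m}}(\mathbb{K}[\Gamma_k])/(x_1,\dots,x_d)\cong\mathrm{gr}_{\overline{\mathfrak{m}}}(B_k)$ and $\beta_i(\mathrm{gr}_{\mathfrak{m}}(\mathbb{K}[\Gamma_k]))=\beta_i^{A'}(\mathrm{gr}_{\overline{\mathfrak{m}}}(B_k))$ — is conditional on it, and your last paragraph offers only a plan (``the scaling can only push extremal variables out of leading monomials, and Buchberger's test closes up''), not an argument; as you correctly observe, $x_i\mapsto x_i^{k}$ is not degree-preserving, balanced binomials become unbalanced, and the standard basis of the tangent-cone ideal genuinely changes, so this is exactly where the work lies. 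Hence your proposal proves the theorem only modulo this unestablished lemma (the $d\ge 2$ analogue of \c{S}ahin's result). For comparison, the paper's proof invokes the very same fact — it writes ``Since $\mathrm{gr}_{\mathfrak{m}_k}(\mathbb{K}[\Gamma_k])$ is Cohen--Macaulay'' before applying Lemma~3 of \cite{Saha-Associated} and the non-zero-divisor claim — resting its justification on the preceding analysis that each defining binomial of $I_\Gamma$ has at most one monomial divisible by $x_1,\dots,x_d$ and on the explicit lift $B_k$ of such binomials; it does not spell out more, but it treats this as an available input rather than an open step. To make your write-up complete you would have to prove this lemma, e.g.\ by showing that a standard basis of $I_\Gamma$ whose leading monomials avoid $x_1,\dots,x_d$ lifts to a standard basis of $I_{\Gamma_k}$ with the same property (checking that the relevant S-pairs reduce without ever producing a leading monomial involving an extremal variable).
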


\begin{proof}
Let $B$ be a binomial in $I_{\Gamma}$ with monomial having no common divisor. Since $\mathrm{gr}_{\mathfrak{m}}(\mathbb{K}[\Gamma]))$ is Cohen-Macaulay, then either $x_1,\dots,x_d$ divides no monomial in $B$ or divides only one of them. In first case, $I_{\Gamma}=I_{\Gamma_k}$, so $B$ lies in $I_{\Gamma_k} $ and define $B_k:=B$. In the latter case, we define $B_k=x_{d+1}^{u_{d+1}}\dots x_{d+r}^{u_{d+r}}-x_{1}^{kv_1}\dots x_{d}^{kv_{d}}x_{d+1}^{v_{d+1}}\dots x_{d+r}^{v_{d+r}}$, hence $\pi_d(I_{\Gamma_k})=\pi_d(I_{\Gamma})$.
\medskip

We consider the map $\pi_{d}:A=\mathbb{K}[x_1,\dots,x_{d},\dots,x_{d+r}] \rightarrow \bar{A}=\mathbb{K}[x_{d+1},\dots,x_{d+r}]$, such that $\pi_{d}(x_{j})=0$, 
$1 \leq j \leq d$ and $\pi_{d}(x_{j})=x_{j}, d+1 \leq j \leq d+r$.
\medskip

Since $\mathrm{gr}_{\mathfrak{m}_{k}}(\mathbb{K} [\Gamma_{k}])$ is Cohen-Macaulay, by Lemma 3 in \cite{Saha-Associated} we have $$\mathrm{gr}_{\bar{\mathfrak{m}}}(\bar{A}/\pi_d(I_{\Gamma_k}))]) \cong \frac{\mathrm{gr}_{\mathfrak{m}}(A/I_{\Gamma_k})}{(x_1,\dots,x_d)\mathrm{gr}_{\mathfrak{m}}(A/I_{\Gamma_k})}.$$
Therefore 
$\beta_i(\mathrm{gr}_{\bar{\mathfrak{m}}}(\bar{A}/\pi_d(I_{\Gamma_k}))])=
\beta_i( \frac{\mathrm{gr}_{\mathfrak{m}}(A/I_{\Gamma_k})}{(x_1,\dots,x_d)\mathrm{gr}_{\mathfrak{m}}(A/I_{\Gamma_k})})=\beta_i(\mathrm{gr}_{\mathfrak{m}}(A/I_{\Gamma_k}))$,  
since $x_1,\dots,x_d$ are non-zero-divisors on $\mathrm{gr}_{\mathfrak{m}}(A/I_{\Gamma_k})$. 
\medskip

Similarly, given that $\mathrm{gr}_{\mathfrak{m}}(\mathbb{K} [\Gamma])$ is Cohen-Macaulay and $\Gamma$ is of homogeneous type, we can write $\beta_i(\mathrm{gr}_{\bar{\mathfrak{m}}}(\bar{A}/\pi_d(I_{\Gamma}))]) = \beta_I(\mathrm{gr}_{\mathfrak{m}}(A/I_{\Gamma}))=\beta_i(A/I_{\Gamma})$. Therefore, 
\begin{align*}
\beta_i(\mathrm{gr}_{\mathfrak{m}}(A/I_{\Gamma_k}))&=\beta_i(\mathrm{gr}_{\bar{\mathfrak{m}}}(\bar{A}/\pi_d(I_{\Gamma_k}))])\\
&=\beta_i(\mathrm{gr}_{\bar{\mathfrak{m}}}(\bar{A}/\pi_d(I_{\Gamma}))]) \\
&= \beta_I(\mathrm{gr}_{\mathfrak{m}}(A/I_{\Gamma}))=
\beta_i(A/I_{\Gamma})=\beta_i(A/I_{\Gamma_k})\, (\text{by Theorem \ref{Gen}}).
\end{align*}
Hence, $\Gamma_k$ is of homogeneous type. 
    
\end{proof}

\section*{Data Availability} This manuscript has no associated data. 

\bibliographystyle{amsalpha}

\end{document}